\newtheorem{theorem}{Theorem}[section]
\newtheorem{lemma}[theorem]{Lemma}
\newtheorem{definition}[theorem]{Definition}
\newtheorem{notation}[theorem]{Notation}
\newtheorem{remark}[theorem]{Remark}
\newtheorem{proposition}[theorem]{Proposition}
\newtheorem{corollary}[theorem]{Corollary}
\begin{document}
\title{The Category of Locales is Rigid}
\author{John Iskra, PhD.}
\maketitle
\begin{abstract}
In this paper we show that the category of frames, and, thus, the category of locales is 'rigid'. This means that every endo-equivalence on them is isomorphic to the identity functor.  To reach this result we prove new results concerning the number of automorphisms between frames and new results concerning the order preserving properties of endo-equivalences.
\end{abstract}
\section{Introduction}
An endo-equivalence on a category is an equivalence (functor) from the category to itself. Following terminology introduced in a question asked \href{http://mathoverflow.net/questions/56887/rigidity-of-the-category-of-schemes}{here}, on mathoverflow.net, I define a rigid category to be one in which all endo-equivlances on it are isomorphic to the identity. In this article I plan to show that the category of locales is rigid.  Taken together, the papers \cite{clark1973automorphism} and \cite{herrlich1991automorphisms} and Peter Freyd's work in \cite{freyd1966abelian}  give a good survey of similar results for other categories. The characterization of Abelian categories by Grothendieck in \cite{grothendieck1957quelques} may be the first well-known result along these lines. In these papers and in other sources, for example, \cite{Borceux}, the belief is expressed that a category which is rigid can be characterized by its \textit{categorical} properties only.  Unfortunately, there does not seem to be wide-spread agreement about just what is meant by a categorical property.  I do not attempt to answer such a deep question here, although I believe its consideration to be greatly important.  What I do is prove that the category or frames is rigid, and, since the category of locales is defined to be the opposite of the category of frames, and since a category is rigid if its dual is, the category of locales is shown to be rigid.\\
   Good sources for finding facts, and definitions concerning frames and locales are \cite{johnstone1986stone}, \cite{Johnstone}, and \cite{Borceux}. Here are the basic definitions, and notations, though. The category of frames consists of objects which are complete lattices in which the infinite distributive law
$$a\wedge\left(\bigvee_{i\in I}b_{i}\right) = \bigvee_{i\in I}\left(a\wedge b_{i}\right)$$
holds. Because frames have all joins and finite meets, they also possess both a largest, or 'top' element and a smallest, or 'bottom' element.  These will be denoted $1$ and $0$ respectively.  Following convention, I use $a\wedge b$ to stand in for the greatest lower bound of the set $\lbrace a,b\rbrace$.  It is also called the meet of $a$ and $b$. Similarly, the symbol $\bigvee$ will stand for the least upper bound and is usually called the join.  Arrows in the category of frames are functions which preserve meets of finite sets and joins of arbitrary ones. The category of locales is usually defined to be the opposite of the category of frames, although, Borceux (\cite{Borceux}) takes a slightly different approach.  In fact this paper was motivated by my desire to understand what, if any, consequences the slightly different definitions had.  As it turns out, because the category of frames (and, thus, the category of locales) is rigid, there are no consequences. In any case, the objects of the two categories are the same, and, so, whether I refer below to an object as a locale of frame, the meaning is the same.\\
 In this paper I depart a bit from convention in that I denote the set of arrows with source $A$ and target $B$ by $\mbox{Arr}\langle A,B\rangle$.  I will also sometimes describe relations between elements in partially ordered sets (including locales) by use of diagrams in which the relation $a\leq b$ is depicted by $a\to b$, or, where $b$ lies above $a$ in the diagram, as in:\\
\centerline{
\xymatrix{
b\\
a\ar@{-}[u]
}
}
Finally, at the end of this next section, I will outline of my strategy to prove the main result. Placing it there will be more helpful to the reader unfamiliar with certain aspects of locale theory which I cover in the section.
\section{The Sierpinski Locale}
We first make use of the idea of a generator in a category.
\begin{definition}
Let $\mathcal{C}$ be a category.  An object $G$ is a generator of $\mathcal{C}$ if for any two unequal arrows\\
\centerline{
\xymatrix{
A\ar@<.5ex>[r]^{f}\ar@<-.5ex>[r]_{g} & B
}
}
there exists an arrow $n$ so that the diagram\\ 
\centerline{
\xymatrix{
G\ar@*{[red]}[r]^{n} & A\ar@<.5ex>[r]^{f}\ar@<-.5ex>[r]_{g} & B
}
}
does \textbf{not} commute; that is, $f\circ n\ne g\circ n$.
\end{definition}
Categories may or may not possess such objects.  The category of frames is one which does.  It is often referred to as the Sierpinski Locale.  As it will be in this paper. 
\begin{definition}
 The Sierpinski locale is the locale $S$ defined by the diagram\\
\centerline{
\xymatrix{
1\\
a\ar[u]\\
0\ar[u]
}
}
\end{definition}
\begin{lemma}\label{Sisgen}
The Sierpinski locale is a generator in the category of frames.
\end{lemma}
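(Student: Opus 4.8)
The plan is to identify $S$ with the free frame on one generator and then simply read off the universal property. The central claim is that for every frame $B$ the evaluation map
$$\mbox{Arr}\langle S,B\rangle \longrightarrow B, \qquad h \longmapsto h(a)$$
is a bijection. For injectivity, I would first note that any frame homomorphism $h\colon S\to B$ is forced to satisfy $h(0)=0$ and $h(1)=1$, since a frame homomorphism preserves the join of the empty set (the bottom) and the meet of the empty set (the top); hence $h$ is determined by the single value $h(a)$. For surjectivity, given an arbitrary $b\in B$ I would define $h(0)=0$, $h(a)=b$, $h(1)=1$ and check that this preserves finite meets and arbitrary joins.

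The verification in that last step is where the (entirely routine) bookkeeping lives, but it is short: every subset of $\{0,a,1\}$ has its meet and its join again in $\{0,a,1\}$, computed by the obvious rules of the three–element chain, so one need only run through the handful of cases $0\vee a=a$, $a\vee 1=1$, $0\wedge a=0$, $a\wedge 1=a$, and the infinitary versions such as $\bigvee_{i\in I}a=a$, and observe that applying $h$ turns each of these into an identity of the form $0\vee b=b$, $b\vee 1=1$, $0\wedge b=0$, $b\wedge 1=b$, $\bigvee_{i\in I}b=b$, all of which hold in $B$ because $B$ is a frame. Thus $h$ is a frame homomorphism with $h(a)=b$, and the bijection is established. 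I do not expect a genuine obstacle here; the one point to be careful about is not to forget that preservation of the empty meet is part of the definition of a frame homomorphism, as that is precisely what pins down $h(1)=1$ and makes the map injective.

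Granting the bijection, generation is immediate. Suppose $f,g\colon A\to B$ are frame homomorphisms with $f\ne g$; then there is some $x\in A$ with $f(x)\ne g(x)$. Let $n\colon S\to A$ be the frame homomorphism corresponding to $x$ under the bijection above, so that $n(a)=x$. Then $(f\circ n)(a)=f(x)\ne g(x)=(g\circ n)(a)$, and since a frame homomorphism out of $S$ is determined by its value at $a$, the composites $f\circ n$ and $g\circ n$ are distinct. Hence $f\circ n\ne g\circ n$, which is exactly the defining property of a generator, and the lemma follows.
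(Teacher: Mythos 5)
Your proof is correct and is essentially the paper's own argument: both hinge on constructing the arrow $n\colon S\to A$ sending $a$ to a witness $x$ with $f(x)\ne g(x)$ and noting that $f\circ n$ and $g\circ n$ differ at $a$. Your free-frame/bijection framing just makes explicit the verification that any assignment $a\mapsto x$ yields a frame homomorphism, which the paper asserts here and spells out later in Lemma \ref{agoesanywhere}.
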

\begin{proof}
Let\\
\centerline{
\xymatrix{
L\ar@<.5ex>[r]^{f}\ar@<-.5ex>[r]_{g} & M
}
}
be unequal arrows in the category of frames.  Then, we can find some $x\in L$ so that $f(x)\ne g(x)$.  By definition $x\in L\setminus\lbrace 0,1\rbrace$, that is, $x$ is neither the top nor bottom element of $L$ since any arrow in the category of frames must preserve those elements.  Define $n:S\to L$ by  
\begin{equation}
n(z) = \left\{ \begin{array}{cc}
               1 & \mbox{for $z = 1$}\\
               x & \mbox{for $z = a$}\\
               0 & \mbox{for $x = 0$}
               \end{array}
\right.
\end{equation}
Then, $n$ preserves all joins and meets, and, by construction, $f\circ n\ne g\circ n$.
\end{proof}
As promised in the introduction, we will show that, in addition to the special property of being a generator, the Sierpinski locale is unusual (perhaps unique) in that it can be characterized by the cardinality of its set of automorphisms.  
\begin{lemma}\label{Shas3autos}
There are just three elements in the set $Arr\langle S,S\rangle$ of arrows from $S$ to $S$.
\end{lemma}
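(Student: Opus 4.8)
The plan is to pin down any arrow $h \in \mbox{Arr}\langle S,S\rangle$ by what it does to the single intermediate element $a$, and then to check that each of the resulting candidates genuinely is an arrow in the category of frames, so that the count is exactly three and not fewer.

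First I would use the fact that every arrow in the category of frames preserves all joins and all \emph{finite} meets, and that the empty set is finite: since $\bigvee\emptyset = 0$ and $\bigwedge\emptyset = 1$, any $h\in\mbox{Arr}\langle S,S\rangle$ must satisfy $h(0)=0$ and $h(1)=1$. Hence $h$ is completely determined by the value $h(a)$, and as $h(a)\in S=\{0,a,1\}$ there are at most three possibilities: the map $e$ with $e(a)=0$, the identity $\mathrm{id}_S$ with $\mathrm{id}_S(a)=a$, and the map $t$ with $t(a)=1$. It then remains to check that all three assignments really do define frame homomorphisms, which gives three distinct arrows and so attains the bound. For the identity there is nothing to do. For $e$ and $t$, because $S$ is a finite chain every subset of $S$ has a largest and a smallest element, so arbitrary joins and finite meets in $S$ reduce to the max and min of finitely many elements; thus it suffices to verify that $e$ and $t$ are monotone and preserve binary joins, binary meets, $0$ and $1$. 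This is a short finite case check: for $e$ one reads off $e(a\vee 0)=e(a)=0=e(a)\vee e(0)$, $e(a\wedge 1)=e(a)=0=e(a)\wedge e(1)$, and the remaining pairs are trivial, while the verification for $t$ is entirely dual. Having exhibited three distinct arrows and ruled out any others, we conclude that $\mbox{Arr}\langle S,S\rangle$ has exactly three elements.

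I do not expect a genuine obstacle in this argument. The only point that needs a little care is the reduction of ``preserves arbitrary joins and finite meets'' to ``preserves $0$, $1$, and binary joins and meets,'' which relies on the finiteness of $S$ together with the correct treatment of the empty join and empty meet; and it is exactly that treatment which forces $h(0)=0$ and $h(1)=1$ and thereby collapses the number of possible arrows from (at most) nine values of $h(a),h(0),h(1)$ down to three.
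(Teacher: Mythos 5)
Your argument is correct and follows essentially the same route as the paper: an arrow must send $0\mapsto 0$ and $1\mapsto 1$ (the paper states this outright, you derive it from the empty join and empty meet), so an arrow is determined by the image of $a$, giving exactly three. Your additional verification that all three candidate assignments really are frame arrows is a point the paper's proof glosses over (it follows from its later Lemma \ref{agoesanywhere}), so your write-up is if anything slightly more complete.
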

\begin{proof}
Any arrow in the category of Frames must take $0$ to $0$ and $1$ to $1$.  Thus, we have three elements in $Arr\langle S,S\rangle$, corresponding to the image of $a$.
\end{proof}
\begin{notation}\label{nota1}
We will call the element of $Arr\langle S,S\rangle$ which sends $a\mapsto 0$, $g$ and the one which sends $a\mapsto 1$, $f$. We will denote the identity arrow on $S$, $1_{S}$.  Similarly, the identity arrow of any object $A$ in a category will be denoted $1_{A}$.
\end{notation}
\begin{notation}
In what follows, the locale $\lbrace 0.1\rbrace$ will be denoted $T$.
\end{notation}
\begin{remark}
In the category of (frames) locales, $T$ is a(n initial) terminal object (p.19 of \cite{Borceux}) 
\end{remark}
\begin{definition}\label{defpts}
Let $L$ be a locale.  Then an arrow $p:L\to T$ in the category of locales is called a point.
\end{definition}
\begin{lemma}\label{Shas2pts}
The locale $S$ has exactly $2$ points.
\end{lemma}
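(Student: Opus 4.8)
The plan is to reduce the statement to an elementary enumeration of frame homomorphisms. By Definition~\ref{defpts}, a point of $S$ is an arrow between $S$ and the terminal locale $T$ in the category of locales; since that category is by definition the opposite of the category of frames, such an arrow is precisely the datum of a frame homomorphism $h:S\to T$. So it suffices to show that $Arr\langle S,T\rangle$ has exactly two elements in the category of frames.

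First I would argue, just as in the proof of Lemma~\ref{Shas3autos}, that any such $h$ is determined by a single value: every frame homomorphism preserves $0$ and $1$, so $h(0)=0$ and $h(1)=1$, and hence $h$ is completely pinned down by $h(a)$, which must lie in $T=\{0,1\}$. This leaves at most two candidates, namely the map $h_{0}$ with $h_{0}(a)=0$ and the map $h_{1}$ with $h_{1}(a)=1$.

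Second, I would check that both $h_{0}$ and $h_{1}$ genuinely are frame homomorphisms, i.e. that each preserves finite meets and arbitrary joins. Since $S=\{0,a,1\}$ and $T=\{0,1\}$ are finite chains, every meet or join in them reduces to a binary one, so this is a short finite verification: one runs through the pairs drawn from $\{0,a,1\}$ and confirms $h_{i}(x\wedge y)=h_{i}(x)\wedge h_{i}(y)$ and $h_{i}(x\vee y)=h_{i}(x)\vee h_{i}(y)$ in each case, and arbitrary joins need no separate treatment because any join in $S$ already equals $0$, $a$, or $1$. As $h_{0}\ne h_{1}$ and both are legitimate, this gives exactly two points.

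I do not anticipate a genuine obstacle for an object this small; the only places that call for a moment's care are the first step — correctly threading the definition of a point through the frames/locales duality, so that a point of $S$ becomes a frame map \emph{out of} $S$ rather than into it — and, to a lesser degree, the join-preservation check in the second step, since that is exactly the condition under which an order map between lattices can fail to be a frame homomorphism. Both are immediate for the Sierpinski locale, so the proof will be quite brief.
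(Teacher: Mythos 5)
Your proposal is correct and follows essentially the same route as the paper's proof: since frame arrows must send $0\mapsto 0$ and $1\mapsto 1$, a point of $S$ (read, as the paper does in practice, as a frame homomorphism $S\to T$) is determined by the image of $a\in\lbrace 0,1\rbrace$, yielding exactly two such arrows. The only difference is that you explicitly verify that both candidates preserve meets and joins, a routine check the paper leaves implicit (it also follows from Lemma~\ref{agoesanywhere} with $L=T$).
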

\begin{proof}
  The two functions described in \ref{nota1} are distinct points.  Since any arrow in the category of frames must take $0$ to $0$ and $1$ to $1$, they are the only two.
\end{proof}
\begin{theorem}\label{Sis2pt3autogen}
$S$ is a two-pointed generator which posesses exactly $3$ automorphisms.
\end{theorem}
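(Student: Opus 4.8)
The plan is to do nothing more than read off the three assertions of the statement from the three lemmas already proved in this section. The word ``generator'' is exactly the conclusion of Lemma \ref{Sisgen}. The adjective ``two-pointed'' is exactly the conclusion of Lemma \ref{Shas2pts}: the two arrows isolated in Notation \ref{nota1} give two distinct points of $S$, and there are no others. The clause ``possesses exactly $3$ automorphisms'' is the conclusion of Lemma \ref{Shas3autos}, which counts $\mathrm{Arr}\langle S,S\rangle$ and finds the three elements $1_S$, $f$ and $g$. So the proof I would give is simply: combine Lemmas \ref{Sisgen}, \ref{Shas2pts} and \ref{Shas3autos}.

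If instead one wanted the theorem to stand on its own, the single fact doing all the work is that every arrow in the category of frames fixes both $0$ and $1$ (a frame homomorphism preserves finite meets, so $f(1) = \bigwedge\emptyset = 1$, and arbitrary joins, so $f(0) = \bigvee\emptyset = 0$). Given that, an arrow with source $S = \{0,a,1\}$ is completely determined by the image of $a$, so there are at most three arrows $S\to S$ and at most two arrows realizing $S$ as a two-point locale; checking that each candidate assignment of $a$ is genuinely order preserving is trivial, since $S$ is a chain, so there are exactly that many; and the same fact is what guarantees that the function $n$ constructed in the proof of Lemma \ref{Sisgen} is a legitimate frame map, which is the only non-formal ingredient of the generating property.

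I expect no real obstacle here: the theorem is a packaging step, collecting three already-established invariants of $S$ --- being a generator, having exactly two points, and having exactly three self-maps --- into one statement for convenient reference later. The only thing worth flagging is a bookkeeping point: the ``$3$'' is the size of the whole hom-set $\mathrm{Arr}\langle S,S\rangle$, of which only $1_S$ is invertible, so in the later sections it is this triple of structural data, and not literally a group of order three, that will be used to pin $S$ down up to isomorphism and thereby drive the rigidity argument.
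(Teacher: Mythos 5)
Your proposal matches the paper's own proof, which is exactly the one-line citation of Lemmas \ref{Sisgen}, \ref{Shas3autos} and \ref{Shas2pts}; the additional self-contained justification and the remark that the ``$3$'' counts all of $\mbox{Arr}\langle S,S\rangle$ rather than just invertible maps are accurate observations about the same argument. No gap here.
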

\begin{proof}
Apply lemmas \ref{Sisgen}, \ref{Shas3autos} and \ref{Shas2pts}.
\end{proof}
\begin{lemma}\label{agoesanywhere}
Let $L$ be a locale and $l$ any element of $L$.  Then, the function $f:S\to L$ defined by
\begin{equation}
f(x) = \left\{ \begin{array}{cc}
               1 & \mbox{for $x = 1$}\\
               l & \mbox{for $x = a$}\\
               0 & \mbox{for $x = 0$}
               \end{array}
\right.
\end{equation}
is a frame arrow.
\end{lemma}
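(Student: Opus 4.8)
The plan is to verify directly the two conditions in the paper's definition of a frame arrow: that $f$ preserves meets of finite subsets of $S$ (in particular the empty meet, i.e.\ the top) and joins of arbitrary subsets of $S$ (in particular the empty join, i.e.\ the bottom). Preservation of $1$ and $0$ is immediate from the first and last clauses of the definition of $f$, so the substance is preservation of binary meets and of joins of arbitrary subsets of $S$.

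The observation that makes the rest routine is that $S=\lbrace 0,a,1\rbrace$ is a three\nobreakdash-element chain, so every subset of $S$ is finite, the meet $x\wedge y$ of two elements is just the smaller of the two, and the join $\bigvee X$ of a subset $X\subseteq S$ is just the largest element of $X$ (with $\bigvee\emptyset=0$). I would first record that $f$ is order preserving: from $0\le l\le 1$ in $L$ we get $f(0)\le f(a)\le f(1)$, and monotonicity on a chain is then automatic.

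For a binary meet, given $x,y\in S$ we may assume $x\le y$, so $x\wedge y=x$; by monotonicity $f(x)\le f(y)$, hence $f(x)\wedge f(y)=f(x)=f(x\wedge y)$. For joins, take $X\subseteq S$: if $X=\emptyset$ then $f(\bigvee X)=f(0)=0=\bigvee f(X)$ in $L$; otherwise $\bigvee X=m$ where $m$ is the greatest element of $X$, and by monotonicity $f(m)$ is an upper bound of $f(X)$ which moreover belongs to $f(X)$, so $f(m)=\bigvee f(X)$, giving $f(\bigvee X)=\bigvee f(X)$. Together with the nullary cases already noted, this exhausts the verification, and $f$ is a frame arrow.

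I do not expect a real obstacle: the only thing to watch is not to overlook the nullary meet and join (the top and bottom), which is exactly why isolating this statement is worthwhile — it is the device that lets one drop the "generic point" $a$ of $S$ onto an arbitrary element of an arbitrary locale, and it will be used repeatedly below. One could alternatively appeal to the fact that $S$ is the free frame on one generator, so that $\mbox{Arr}\langle S,L\rangle$ is in bijection with the underlying set of $L$ via $h\mapsto h(a)$; but that fact is established by precisely the computation above, so the direct argument is the honest route.
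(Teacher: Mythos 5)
Your proof is correct, and in substance it is the same as the paper's: a direct verification that $f$ preserves the frame operations. The difference is one of organization. The paper simply enumerates all binary meets $f(x\wedge y)$ and binary joins $f(x\vee y)$ for the three elements of $S$ and checks each equation by hand, whereas you first observe that $f$ is order preserving (from $0\leq l\leq 1$) and that $S$ is a finite chain, and then dispatch all meets and joins at once by monotonicity. Your version is in fact slightly more thorough: you explicitly treat arbitrary (including empty) joins and the nullary meet, while the paper's proof only records the binary cases and leaves preservation of $0$ and $1$ implicit in the definition of $f$; on a three-element chain the two verifications amount to the same thing, but your handling of the nullary cases is the cleaner way to see that nothing has been skipped. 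Your closing remark about the free-frame-on-one-generator viewpoint is also apt --- the paper never invokes it, and as you say it would be circular to use it here, so the direct computation is the right choice.
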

\begin{proof}
We check to see that $f$ preserves joins and meets:\\
Meets:
\begin{enumerate}
\item
$$f(1\wedge a) = f(a) = l = 1\wedge l = f(1)\wedge f(a)$$
\item
$$f(1\wedge 0) = f(0) = 0 = 1\wedge 0 = f(1)\wedge f(0)$$
\item
$$f(a\wedge 0) = f(0) = 0 = 1\wedge 0 = f(a)\wedge f(0)$$
\end{enumerate}
Joins:
\begin{enumerate}
\item
$$f(1\vee a) = f(1) = 1 = 1\vee 1 = f(1)\vee f(a)$$
\item
$$f(1\vee 0) = f(1) = 1 = 1\vee 0 = f(1)\vee f(0)$$
\item
$$f(a\vee 0) = f(a) = l = l\vee 0 = f(a)\vee f(0)$$
\end{enumerate}
\end{proof}
\begin{lemma}\label{partordisomimplisframeisom}
Let $X$ and $Y$ be frames and $f:X\to Y$ a bijection which preserves order. Suppose that the inverse of $f$ also preserves order.  Then, $f$ is an isomorphism in the category of frames.
\end{lemma}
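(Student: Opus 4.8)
The plan is to show that an order-preserving bijection between frames whose inverse also preserves order automatically preserves all finite meets and arbitrary joins, and hence is a frame homomorphism; since its inverse has the same properties, it is likewise a frame homomorphism, and so $f$ is an isomorphism in the category of frames. The key observation is that joins and meets in a poset are characterised purely order-theoretically: $\bigvee_{i} b_i$ is the least upper bound of the $b_i$, and least upper bounds are preserved by any bijection that is an order-isomorphism.

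In more detail, first I would record that a bijection $f$ with $f$ and $f^{-1}$ both order-preserving is an order-isomorphism: $x \le y$ if and only if $f(x)\le f(y)$. Next, given a family $\{b_i\}_{i\in I}$ in $X$ with join $b=\bigvee_i b_i$, I would show $f(b)=\bigvee_i f(b_i)$ in $Y$. Indeed, $b_i\le b$ gives $f(b_i)\le f(b)$, so $f(b)$ is an upper bound of $\{f(b_i)\}$. If $c$ is any upper bound of $\{f(b_i)\}$ in $Y$, write $c=f(d)$ using surjectivity; then $f(b_i)\le f(d)$ forces $b_i\le d$ for all $i$ by the order-isomorphism property, hence $b\le d$, hence $f(b)\le f(d)=c$. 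Thus $f(b)$ is the least upper bound, i.e. $f$ preserves arbitrary joins. The identical argument with inequalities reversed (greatest lower bounds) shows $f$ preserves arbitrary meets, in particular finite ones, and that $f(0)=0$, $f(1)=1$ come out as the empty join and empty meet. Therefore $f$ is a frame arrow.

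Finally, since $f^{-1}$ is also a bijection with both itself and its inverse ($=f$) order-preserving, the same argument shows $f^{-1}$ is a frame arrow. As $f\circ f^{-1}=1_Y$ and $f^{-1}\circ f = 1_X$, this exhibits $f$ as an isomorphism in the category of frames.

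I do not anticipate a genuine obstacle here: the only thing to be careful about is not to assume $f$ already commutes with the lattice operations (that is what is being proved), and instead to argue entirely through the universal properties of sup and inf together with the fact that an order-isomorphism reflects as well as preserves the order. One minor point worth stating explicitly is that frames, being complete lattices, have all the relevant infima and suprema on both sides, so the least upper bound $\bigvee_i f(b_i)$ whose value we are computing does exist in $Y$ a priori; the argument then pins down its value as $f(\bigvee_i b_i)$.
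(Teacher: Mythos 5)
Your proposal is correct and follows essentially the same route as the paper: show $f(\bigvee S)$ is the least upper bound of $f(S)$ (using the inverse to reflect order), do the analogous argument for meets, and then note the inverse satisfies the same hypotheses, so both $f$ and $f^{-1}$ are frame arrows. The only cosmetic difference is that you prove preservation of arbitrary meets and treat $0,1$ as empty join/meet, whereas the paper checks binary meets directly; the substance is identical.
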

\begin{proof}
Let $g$ be the inverse function of $f$.  Let $S\subseteq X$ be arbitrary.  Let
$$m = \bigvee S$$
Let $s\in S$ be arbitrary.  Then, since $s\leq m$ and $f$ preserves order, $f(s)\leq f(m)$. Since $s$ was arbitrary, 
$$f(m)\geq\bigvee_{s\in S}f(s)$$
Suppose that 
$$t\geq\bigvee_{s\in S}f(s)$$
Let $s\in S$ be arbitrary.  Then, $t\geq f(s)$, thus, $g(t)\geq s$.  Since $s$ was arbitrary, 
$$g(t)\geq\bigvee_{s\in S}(s)$$
Thus,
$$t\geq f\left(\bigvee_{s\in S}(s)\right) = m$$
Thus, $f$ preserves arbitrary joins.\\
Now let $x$ and $w$ be arbitrary elements of $X$. Let $l = x\wedge w$.  Then, $l\leq x$ and $l\leq w$ so $f(l)\leq f(x)$ and $f(l)\leq f(w)$. Thus, $f(l)$ is a lower bound for $\lbrace f(x),f(w)\rbrace$.  Suppose $d$ is a lower bound of $\lbrace f(x),f(w)\rbrace$. Then, $g(d)$ is a lower bound for $\lbrace x,w\rbrace$, thus, $g(d)\leq l$ and so $d\leq f(l)$.  Thus, $f(l) = f(x\wedge w) = f(x)\wedge f(w)$. Thus, $f$ preserves finite meets.\\
Of course, the same argument applies to the inverse of $f$, thus $g$ is also an arrow in the category of frames.  Thus, $f$ is an isomorphism of frames.
\end{proof}
Finally, the Sierpinski locale is important to us because it allows us to recapture the structure of a locale $L$ by examining the arrows $S\to L$.  In particular, we have
\begin{theorem}
Let $L$ be a frame.  Define a partial order on $\mbox{Arr}\langle S,L\rangle$ by 
$$p\leq q\Leftrightarrow p(a)\leq q(a)$$
where, as before, $a\in S$ is neither the top nor bottom element. Then, 
$L$ is isomorphic to $\mbox{Arr}\langle S,L\rangle$ as frames. 
\end{theorem}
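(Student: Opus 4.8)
The plan is to exhibit an explicit bijection $\Phi\colon L \to \mathrm{Arr}\langle S, L\rangle$ and then invoke Lemma~\ref{partordisomimplisframeisom} so that we never have to verify directly that $\Phi$ preserves joins and meets. For $l \in L$, let $\Phi(l)$ be the frame arrow $S \to L$ sending $a \mapsto l$ (and $1 \mapsto 1$, $0 \mapsto 0$); Lemma~\ref{agoesanywhere} guarantees this is indeed a frame arrow, so $\Phi$ is well-defined. The inverse map is evaluation at $a$: given $q \in \mathrm{Arr}\langle S, L\rangle$, send it to $q(a)$. That these are mutually inverse is immediate, since a frame arrow out of $S$ is completely determined by the image of $a$ (the images of $0$ and $1$ being forced), which is exactly the content of the argument in Lemma~\ref{Shas3autos} applied with codomain $L$ instead of $S$.

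Next I would check that $\Phi$ and $\Phi^{-1}$ both preserve the order. For $\Phi$: if $l \le l'$ in $L$, then by the very definition of the order on $\mathrm{Arr}\langle S, L\rangle$ we have $\Phi(l) \le \Phi(l')$ precisely because $\Phi(l)(a) = l \le l' = \Phi(l')(a)$. For $\Phi^{-1}$: if $p \le q$ in $\mathrm{Arr}\langle S, L\rangle$, then by definition $p(a) \le q(a)$, i.e. $\Phi^{-1}(p) \le \Phi^{-1}(q)$. So both directions are essentially tautological once the order on the hom-set is unwound — the order was defined exactly to make $\Phi$ an order-isomorphism.

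Finally I would need one small sanity check that is easy to overlook: that the relation defined on $\mathrm{Arr}\langle S, L\rangle$ by $p \le q \Leftrightarrow p(a) \le q(a)$ is genuinely a partial order (reflexive, antisymmetric, transitive). Reflexivity and transitivity are inherited from the order on $L$; antisymmetry uses that $p$ and $q$ are determined by $p(a)$ and $q(a)$, so $p(a) = q(a)$ forces $p = q$. With this in hand, $\Phi$ is an order-preserving bijection whose inverse preserves order, and Lemma~\ref{partordisomimplisframeisom} immediately upgrades it to an isomorphism of frames, giving $L \cong \mathrm{Arr}\langle S, L\rangle$.

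I do not expect a serious obstacle here; the theorem is really a bookkeeping consequence of Lemmas~\ref{agoesanywhere} and~\ref{partordisomimplisframeisom}. The only mild subtlety is making sure one does not have to reprove preservation of joins and meets from scratch — the point of routing through Lemma~\ref{partordisomimplisframeisom} is precisely to avoid that — and making sure the codomain-$L$ version of ``a frame map out of $S$ is determined by the image of $a$'' is stated, since Lemma~\ref{Shas3autos} as written only covers the case $L = S$. That generalization is trivial (the proof is identical), so the argument goes through cleanly.
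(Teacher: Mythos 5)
Your bijection is the same one the paper uses (the paper's $z\colon l\mapsto(a\mapsto l)$, with inverse $e\colon p\mapsto p(a)$), and the final appeal to Lemma~\ref{partordisomimplisframeisom} matches the paper's last step. But there is a genuine gap in how you invoke that lemma: its hypotheses require \emph{both} posets to be frames, and the theorem's conclusion --- ``isomorphic as frames'' --- already presupposes that $\mbox{Arr}\langle S,L\rangle$, which so far is only a partially ordered set, carries a frame structure. You never establish this, and it is exactly the part of the argument you decided to skip: roughly the first half of the paper's proof is devoted to showing that $\mbox{Arr}\langle S,L\rangle$ has arbitrary joins and binary meets, computed pointwise at $a$ (the join of $W$ is the arrow $a\mapsto\bigvee_{p\in W}p(a)$, the meet of $p$ and $q$ is $a\mapsto p(a)\wedge q(a)$), before the order-isomorphism is fed into Lemma~\ref{partordisomimplisframeisom}. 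Note also that the lemma's own proof uses completeness of the codomain (it forms $\bigvee_{s\in S}f(s)$ in $Y$), so it cannot be applied to a bare poset; the subtlety you flagged (not re-verifying that $\Phi$ preserves joins and meets) is not the one that bites --- the issue is whether the hom-set has joins and meets at all.

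The gap is easily filled in either of two ways: repeat the paper's direct computation of joins and meets in the hom-set, or add the observation that an order-isomorphism transports completeness and the infinite distributive law, so that $\mbox{Arr}\langle S,L\rangle$ is a frame because $L$ is (the join of $W\subseteq\mbox{Arr}\langle S,L\rangle$ being $\Phi\left(\bigvee\Phi^{-1}(W)\right)$, and similarly for meets and for distributivity). With one such sentence inserted before the appeal to Lemma~\ref{partordisomimplisframeisom}, your argument is correct and is essentially the paper's proof, streamlined. Your remaining points --- well-definedness of $\Phi$ via Lemma~\ref{agoesanywhere}, the codomain-$L$ analogue of Lemma~\ref{Shas3autos}, and the tautological order-preservation in both directions --- are fine and agree with what the paper does.
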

\begin{proof}
Define $e:\mbox{Arr}\langle S,L\rangle\to L$ by the assignment $p\mapsto p(a)$. We first check to see that $\mbox{Arr}\langle S,L\rangle$ is a frame. Let $W\subseteq\mbox{Arr}\langle S,L\rangle$. Let $m:S\to L$ be defined by the assignment 
$$a\mapsto \bigvee{p\in W} e(p)$$
Let $p\in W$ be arbitrary.  Then, $p(a)\leq\vee{p\in W} e\circ p$.  Suppose $d$ is an upper bound for $W$. Let $p\in W$ be arbitrary.  Then, by definition of the partial order on $\mbox{Arr}\langle S,L\rangle$, $p(a)\leq d(a)$.  This is identical to saying that $e(p)\leq e(d)$.  Thus, 
$$d(a)\geq\bigvee{p\in W} e(p)$$
Thus, $d\geq m$.  Thus,
$$m = \bigvee W$$
Let $p$ and $q$ be two frame arrows from $S$ to $L$.  Let $b$ map $a$ to $p(a)\wedge q(a)$.  Then, $b(a)\leq p(a)$ and $b(a)\leq q(a)$ so $b\leq p$ and $b\leq q$.  Suppose $h\leq p$ and $h\leq q$. Then, $h(a)\leq p(a)$ and $h(a)\leq q(a)$, Thus, $h(a)\leq p(a)\wedge q(a) = b(a)$. Thus, $h\leq b$.  Thus, 
$$b = p\wedge q$$\\
Now, $e:p\mapsto p(a)$ defines a function $\mbox{Arr}\langle S,L\rangle\to L$ and, as we have just seen, if $W\subseteq\mbox{Arr}\langle S,L\rangle$, 
$$e(\bigvee W) = \bigvee_{p\in W}e(p)$$
and if $p$ and $q$ are elements of $\mbox{Arr}\langle S,L\rangle$,
$$e(p\wedge q) = e(p)\wedge e(q)$$.
Thus, $e$ is an arrow of frames.\\
Now define $z:L\to \mbox{Arr}\langle S,L\rangle$ by 
$$l\mapsto \left(p: a\mapsto l\right)$$
By \ref{agoesanywhere}, $p$ is an element of $\mbox{Arr}\langle S,L\rangle$.  Suppose $l\leq x$ in $L$. Then, $z(l) = d:a\mapsto l\leq z(x) = c:a\mapsto x$.  The proof is completed by Lemma \ref{partordisomimplisframeisom}.  
\end{proof}
In the next sections we prove that 
\begin{enumerate}
\item $S$ is, up to isomorphism, the only object in the category of frames for which theorem \ref{Sis2pt3autogen} is true.  We will do this by showing:
\begin{enumerate}
\item That in the case when the locale has precisely four elements, there are greater than 3 automorphisms
\item Any locale with five or more elements which is a generator and 2 pointed has more than 3 automorphisms.
\end{enumerate}
\item Any equivalence on the category of frames preserves
\begin{enumerate}
\item The number of points of a locale
\item The number of automorphisms on a locale
\item The property of being a generator
\item The order on the sets of arrows.
\end{enumerate}
\end{enumerate}
\section{Equivalences on a Category}
In this section we recall some basic facts about functors, focusing especially on a particular kind of functorial equivalences.  Throughout the paper, I will distinguish functors notationally by rendering them in black board font, for example, ``the functor $\mathbbm{F}$''. 
\begin{definition}
Let $\mathcal{C}$ and $\mathcal{D}$ be categories.  A functor from $\mathcal{C}$ to $\mathcal{D}$ is an assignment carrying objects of $\mathcal{C}$ to objects of $\mathcal{D}$ and a function
$$\mbox{Arr}(C,C')\to \mbox{Arr}(\mathbbm{F}C,\mathbbm{F}C')$$
for every pair of objects $C$ and $C'$ of $\mathcal{C}$, subject to the condition that composition and the identity arrows are preserved.
\end{definition}
\begin{definition}
A functor $\mathbbm{F}:\mathcal{C}\to\mathcal{D}$ is faithful if the function 
$$\mbox{Arr}(C,C')\to \mbox{Arr}(FC,FC')$$
is one-to-one for every pair of objects $C$ and $C'$ of $\mathcal{C}$.
\end{definition}
\begin{definition}
A functor $\mathbbm{F}:\mathcal{C}\to\mathcal{D}$ is full if the function 
$$\mbox{Arr}(C,C')\to \mbox{Arr}(FC,FC')$$
is onto for every pair of objects $C$ and $C'$ of $\mathcal{C}$.
\end{definition}
\begin{definition}
Let $\mathbbm{T}$ and $\mathbbm{R}$ be functors from $\mathcal{C}$ to $\mathcal{D}$.  A natural transformation from $\mathbbm{T}$ to $\mathbbm{R}$, denoted $\alpha: T\Rightarrow R$,  is a class of arrows $\alpha_{C}:TC\to RC$ for every object $C$ of $\mathcal{C}$ so that the diagram\\
\centerline{
\xymatrix{
\mathbbm{T}C\ar[r]^{\alpha_{C}}\ar[d]_{Tf} & \mathbbm{R}C\ar[d]^{Rf}\\
\mathbbm{T}C'\ar[r]_{\alpha_{C'}} & \mathbbm{R}C'
}
}
commutes for every pair of objects $C$, $C'$ and every arrow $f:C\to C'$.
\end{definition}
\begin{definition}
A natural transformation $\alpha:\mathbbm{T}\Rightarrow \mathbbm{R}$ is a natural isomorphism if $\alpha_{C}$ is an isomorphism for every object $C$ of $\mathcal{C}$.
\end{definition}
\begin{definition}
Let $\mathcal{C}$ and $\mathcal{D}$ be categories.  A functor $\mathbbm{T}: \mathcal{C}\to \mathcal{D}$ is a left adjoint of $\mathbbm{R}:\mathcal{D}\to \mathcal{C}$ if for every pair of objects $C$ of $\mathcal{C}$ and $D$ of $\mathcal{D}$ there exists a bijection 
$$\theta_{C,D}:\mbox{Arr}(\mathbbm{T}C,D)\to\mbox{Arr}(C,\mathbbm{R}D)$$
which is natural in both $C$ and $D$.
\end{definition}

\begin{definition}\label{defeq}
Let $\mathcal{C}$ be a category.  An equivalence on $\mathcal{C}$ is a functor $\mathbbm{T}:\mathcal{C}\to \mathcal{C}$ which is full and faithful and has a full and faithful left adjoint $\mathbbm{R}$.
\end{definition}
\begin{lemma}\label{equivequiv}
A functor $\mathbbm{T}:\mathcal{C}\to \mathcal{D}$ is an equivalence  if and only if any of the following hold:
\begin{enumerate}
\item $\mathbbm{T}$ has a full and faithful right adjoint $\mathbbm{R}$.
\item $\mathbbm{T}$ has a right adjoint $\mathbbm{R}$ and the two canonical natural transformations of the adjunction $\eta:1_{\mathcal{C}}\Rightarrow \mathbbm{R}\mathbbm{T}$ and $\epsilon : 1_{\mathcal{D}}\Rightarrow \mathbbm{T}\mathbbm{R}$ are isomorphisms.
\item There exists a functor $\mathbbm{R}:\mathcal{D}\to \mathcal{C}$ and two natural isomorphisms $\alpha :1_{\mathcal{C}}\Rightarrow \mathbbm{R}\mathbbm{T}$ and $\beta : 1_{\mathcal{D}}\Rightarrow \mathbbm{T}\mathbbm{R}$ 
\item $\mathbbm{T}$ is full and faithful and each object $D$ of $\mathcal{D}$ is isomorphic to an object of the form $\mathbbm{T}A$ for some object $A$ of $\mathcal{C}$.
\end{enumerate}
\end{lemma}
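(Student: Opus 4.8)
The statement collects the familiar equivalent formulations of ``$\mathbbm{T}$ is an equivalence of categories,'' so my plan is to run the cycle of implications $(\mathrm{Def.}~\ref{defeq})\Rightarrow(2)\Rightarrow(1)\Rightarrow(3)\Rightarrow(4)\Rightarrow(\mathrm{Def.}~\ref{defeq})$ after first isolating the one technical fact that powers all of them. That fact is: for any adjunction $\mathbbm{F}\dashv\mathbbm{G}$ with unit $\eta:1\Rightarrow\mathbbm{G}\mathbbm{F}$ and counit $\epsilon:\mathbbm{F}\mathbbm{G}\Rightarrow 1$, the functor $\mathbbm{F}$ is full and faithful if and only if $\eta$ is a natural isomorphism, and $\mathbbm{G}$ is full and faithful if and only if $\epsilon$ is a natural isomorphism. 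I would prove this by noting that the action of $\mathbbm{F}$ on arrows $\mbox{Arr}(A,B)\to\mbox{Arr}(\mathbbm{F}A,\mathbbm{F}B)$, followed by the adjunction bijection $\mbox{Arr}(\mathbbm{F}A,\mathbbm{F}B)\cong\mbox{Arr}(A,\mathbbm{G}\mathbbm{F}B)$, is exactly post-composition with $\eta_B$; hence $\mathbbm{F}$ is full and faithful iff $\eta_B\circ(-)$ is a bijection for all $A,B$, which by the Yoneda lemma holds iff $\eta_B$ is an isomorphism for every $B$. The assertion about $\mathbbm{G}$ and $\epsilon$ is the formal dual.

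Granting this, the four implications around the cycle become short. For $(\mathrm{Def.}~\ref{defeq})\Rightarrow(2)$: the definition supplies an adjunction $\mathbbm{R}\dashv\mathbbm{T}$ in which both functors are full and faithful, so by the fact above both the unit and the counit of this adjunction are natural isomorphisms; an adjunction whose unit and counit are invertible is an adjoint equivalence, so $\mathbbm{T}$ is moreover left adjoint to $\mathbbm{R}$, with unit and counit the inverse transformations, which are again isomorphisms --- this is (2) (with the counit written, as in the statement, in the form $1_{\mathcal{D}}\Rightarrow\mathbbm{T}\mathbbm{R}$). For $(2)\Rightarrow(1)$: condition (2) gives $\mathbbm{T}\dashv\mathbbm{R}$ with unit and counit isomorphisms, so the converse halves of the fact give that $\mathbbm{T}$ is full and faithful (unit invertible) and $\mathbbm{R}$ is full and faithful (counit invertible), which is (1). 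For $(1)\Rightarrow(3)$: from $\mathbbm{T}\dashv\mathbbm{R}$ we obtain, since $\mathbbm{R}$ is full and faithful, that the counit $\mathbbm{T}\mathbbm{R}\Rightarrow 1$ is an isomorphism, and, since $\mathbbm{T}$ is full and faithful, that the unit $1\Rightarrow\mathbbm{R}\mathbbm{T}$ is an isomorphism; take $\alpha$ to be the unit and $\beta$ the inverse of the counit. (Here I read (1), parallel to Definition~\ref{defeq}, as also asserting that $\mathbbm{T}$ itself is full and faithful; without that hypothesis (1) is genuinely weaker --- any reflective subcategory inclusion together with its reflector would satisfy it --- and it is in any event available once the cycle is threaded through $(2)\Rightarrow(1)$.) For $(3)\Rightarrow(4)$: given natural isomorphisms $\alpha:1_{\mathcal{C}}\cong\mathbbm{R}\mathbbm{T}$ and $\beta:1_{\mathcal{D}}\cong\mathbbm{T}\mathbbm{R}$, faithfulness of $\mathbbm{T}$ follows because $\mathbbm{T}f=\mathbbm{T}g$ forces $\mathbbm{R}\mathbbm{T}f=\mathbbm{R}\mathbbm{T}g$ and then $f=g$ by naturality of $\alpha$, fullness of $\mathbbm{T}$ follows by sending $h:\mathbbm{T}A\to\mathbbm{T}B$ to $\alpha_B^{-1}\circ\mathbbm{R}h\circ\alpha_A$ and verifying via $\beta$ that $\mathbbm{T}$ carries it to $h$, and essential surjectivity is immediate since $\beta_D:D\to\mathbbm{T}(\mathbbm{R}D)$ is an isomorphism for each $D$.

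The implication $(4)\Rightarrow(\mathrm{Def.}~\ref{defeq})$ is where the real work lies. Given that $\mathbbm{T}$ is full, faithful, and essentially surjective, I would choose for each object $D$ of $\mathcal{D}$ an object $\mathbbm{R}D$ of $\mathcal{C}$ together with an isomorphism $\epsilon_D:\mathbbm{T}\mathbbm{R}D\to D$, and for an arrow $g:D\to D'$ define $\mathbbm{R}g$ to be the unique arrow with $\mathbbm{T}(\mathbbm{R}g)=\epsilon_{D'}^{-1}\circ g\circ\epsilon_D$ --- existence and uniqueness coming from full faithfulness of $\mathbbm{T}$. One then checks that $\mathbbm{R}$ preserves identities and composition (again using faithfulness), that $\epsilon$ is natural by construction, and that the arrows $\alpha_C:C\to\mathbbm{R}\mathbbm{T}C$ defined as the unique $\mathbbm{T}$-preimages of $\epsilon_{\mathbbm{T}C}^{-1}$ form a natural isomorphism --- an isomorphism because a full and faithful functor reflects isomorphisms. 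The delicate part, and the step I expect to be the main obstacle, is upgrading the pair of natural isomorphisms $(\alpha,\,\beta:=\epsilon^{-1})$ to an honest adjunction $\mathbbm{R}\dashv\mathbbm{T}$: the triangle identities will not hold on the nose for these particular transformations, so one must invoke (or reprove) the standard lemma that any equivalence can be refined to an adjoint equivalence by replacing one of the two natural isomorphisms with a suitable pasting of it with the other and with the functors, forcing one triangle identity to hold and hence the other automatically. Once $\mathbbm{R}\dashv\mathbbm{T}$ is in place, $\mathbbm{R}$ is full and faithful because it is a quasi-inverse to the equivalence $\mathbbm{T}$ (equivalently, by rerunning the $(3)\Rightarrow(4)$ argument with the roles of $\mathbbm{T}$ and $\mathbbm{R}$ exchanged), so Definition~\ref{defeq} is met. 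Apart from that refinement, the only difficulty is bookkeeping: keeping careful track of which full-faithfulness assumption makes which of the unit and counit invertible, and in which direction each natural transformation runs.
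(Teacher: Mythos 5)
The paper never proves Lemma \ref{equivequiv}: it is quoted as standard background (it is the classical characterization of equivalences found in Mac Lane or in Borceux), so there is no in-paper argument to compare yours against; what you have done is supply the missing textbook proof, and your outline is correct. Your cycle Definition \ref{defeq} $\Rightarrow$ (2) $\Rightarrow$ (1) $\Rightarrow$ (3) $\Rightarrow$ (4) $\Rightarrow$ Definition \ref{defeq}, powered by the single fact that in an adjunction the left adjoint is full and faithful iff the unit is invertible and the right adjoint is full and faithful iff the counit is invertible, is exactly the standard route. Two of your side remarks are worth stressing. First, you are right that item (1) as literally written is too weak: any reflector onto a reflective subcategory has a full and faithful right adjoint without being an equivalence, so (1) must be read, as you read it, as also asserting that $\mathbbm{T}$ is full and faithful; likewise the transformation $\epsilon$ in item (2) is written in the direction opposite to the usual counit, which you silently correct. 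These are defects of the statement (and, one should add, Definition \ref{defeq} is phrased only for endofunctors while the lemma speaks of $\mathbbm{T}:\mathcal{C}\to\mathcal{D}$), not of your proof. Second, your caution in the step (4) $\Rightarrow$ Definition \ref{defeq} is unnecessary: with $\mathbbm{R}g$ defined by $\mathbbm{T}(\mathbbm{R}g)=\epsilon_{D'}^{-1}\circ g\circ\epsilon_{D}$ and the unit $\eta_{C}$ defined as the unique $\mathbbm{T}$-preimage of $\epsilon_{\mathbbm{T}C}^{-1}$, both triangle identities for $\mathbbm{T}\dashv\mathbbm{R}$ already hold on the nose (verify them after applying the faithful functor $\mathbbm{T}$), and since the unit and counit are invertible the adjunction flips to give the full and faithful left adjoint $\mathbbm{R}\dashv\mathbbm{T}$ that Definition \ref{defeq} demands; invoking the general ``refine an equivalence to an adjoint equivalence'' lemma is harmless but not needed.
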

\begin{lemma}\label{isomimpbij}
Let $X$ and $Y$ be objects in a category $\mathcal{C}$. Suppose $X$ is isomorphic to $Y$. Let $C$ be any object in $\mathcal{C}$. Then the cardinality of the set $\mbox{Arr}\langle C,X\rangle$ is the same as the cardinality of the set $\mbox{Arr}\langle C,Y\rangle$ and the cardinality of the set $\mbox{Arr}\langle C,X\rangle$ is the same as the cardinality of the set $\mbox{Arr}\langle C,Y\rangle$.
\end{lemma}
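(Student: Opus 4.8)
The plan is to exhibit explicit mutually inverse functions between the two hom-sets by composing with the isomorphism that witnesses $X\cong Y$. Fix an isomorphism $\phi:X\to Y$ in $\mathcal{C}$ and let $\psi:Y\to X$ be its two-sided inverse, so that $\psi\circ\phi=1_{X}$ and $\phi\circ\psi=1_{Y}$. Everything will then follow from just the associativity and unit axioms of a category.

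First I would treat the ``arrows into'' case. Define $\Phi:\mbox{Arr}\langle C,X\rangle\to\mbox{Arr}\langle C,Y\rangle$ by $\Phi(f)=\phi\circ f$ and $\Psi:\mbox{Arr}\langle C,Y\rangle\to\mbox{Arr}\langle C,X\rangle$ by $\Psi(g)=\psi\circ g$; both are well defined, since a composite of arrows is again an arrow. For $f\in\mbox{Arr}\langle C,X\rangle$ one computes $\Psi(\Phi(f))=\psi\circ(\phi\circ f)=(\psi\circ\phi)\circ f=1_{X}\circ f=f$, using associativity and the identity law, and symmetrically $\Phi(\Psi(g))=g$ for $g\in\mbox{Arr}\langle C,Y\rangle$. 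Hence $\Phi$ and $\Psi$ are mutually inverse, so the two sets have the same cardinality.

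Then I would dispatch the ``arrows out of'' case by the dual argument, precomposing rather than postcomposing: define $\mbox{Arr}\langle X,C\rangle\to\mbox{Arr}\langle Y,C\rangle$ by $h\mapsto h\circ\psi$, with proposed inverse $k\mapsto k\circ\phi$, and verify that the two composites reduce to identity maps exactly as above. This gives the equality of the remaining pair of cardinalities.

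I do not expect any real obstacle here; the statement is a routine consequence of the category axioms. The only point requiring care is bookkeeping: in each of the two cases one must pick the correct one of $\phi$ or $\psi$ so that the composites collapse via $\psi\circ\phi=1_{X}$ or $\phi\circ\psi=1_{Y}$, and one should invoke associativity and the unit laws explicitly, since these are the only structural facts being used. (I also note that the statement as printed repeats one clause verbatim; the intended content is the pair of equalities, one for $\mbox{Arr}\langle C,-\rangle$ and one for $\mbox{Arr}\langle -,C\rangle$, and the argument above establishes both.)
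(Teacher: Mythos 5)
Your proof is correct and follows essentially the same route as the paper: composing with the isomorphism (and its inverse) to obtain a bijection of hom-sets, handling the $\mbox{Arr}\langle C,-\rangle$ case by postcomposition and the $\mbox{Arr}\langle -,C\rangle$ case by precomposition, which is also how you correctly read the duplicated clause in the statement. The only cosmetic difference is that you exhibit a two-sided inverse directly, while the paper verifies injectivity (via $\phi$ being monic) and surjectivity separately.
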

\begin{proof}

Let $X$ and $Y$ be objects in category $\mathcal{C}$. Suppose $X$ is isomorphic to $Y$. Let $C$ be an arbitrary object in $\mathcal{C}$. Let $\phi:X\to Y$ be the isomorphism.  Define $\Phi:  \mbox{Arr}\langle C,Y\rangle\to \mbox{Arr}\langle C,X\rangle$  by $f\mapsto \phi\circ f$.  Let $f\in\mbox{Arr}\langle C,X\rangle$ and $g\in\mbox{Arr}\langle C,X\rangle$ be arbitrary.  Suppose $\Phi(f) = \Phi(g)$.  Then, $\phi\circ f = \phi\circ g$. Since $\phi$ is an isomorphism, it is monic.  Thus, $f = g$.  Thus, $\Phi$ is one-to-one.  Let $h$ be arbitrary and suppose $h\in\mbox{Arr}\langle C,Y\rangle$. Since $\phi $ is an isomorphism, there exists an arrow $\psi$ so that $\phi\circ\psi = 1_{Y}$ and $\psi\circ\phi = 1_{X}$.  Then, $\Phi(\psi h) = \phi\circ\psi\circ h = 1_{X}\circ h = h$.  Thus, $\Phi$ is onto.  Since $\Phi$ is one-to-one and onto it is a bijection of sets.  Thus the cardinality of the set $\mbox{Arr}\langle C,X\rangle$ is the same as the cardinality of the set $\mbox{Arr}\langle C,Y\rangle$.  The second assertion follows by defining $\Phi$ by $f\mapsto f\circ\phi$ and arguing as before. 
\end{proof}
\begin{remark}
Let $\mathbbm{T}$ be an equivalence from a category $\mathcal{C}$ to $\mathcal{D}$.  Then, since $\mathbbm{T}$ is full and faithful(\ref{equivequiv}) the assignment $f\mapsto \mathbbm{T}f$ defines a bijection of sets $\mbox{Arr}\langle A,B\rangle\to\mbox{Arr}\langle \mathbbm{T}A,\mathbbm{T}B\rangle$.
\end{remark}
\begin{proposition}\label{eqpresterm}
Let $\mathbbm{T}$ be an equivalence on a category $\mathcal{C}$.  Suppose $\Omega$ is the terminal object of $\mathcal{C}$.  Then $\mathbbm{T}\Omega$ is also a terminal object of $\mathcal{C}$.
\end{proposition}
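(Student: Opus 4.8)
The plan is to verify directly that $\mathbbm{T}\Omega$ satisfies the defining property of a terminal object, namely that $\mathrm{Arr}\langle C,\mathbbm{T}\Omega\rangle$ is a singleton for every object $C$ of $\mathcal{C}$. The three ingredients I would assemble are: essential surjectivity of an equivalence, the hom-set bijection induced by fullness and faithfulness (the remark preceding this proposition), and the transfer of hom-set cardinalities along isomorphisms (Lemma \ref{isomimpbij}).

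First I would fix an arbitrary object $C$ of $\mathcal{C}$. By Lemma \ref{equivequiv}, part (4), an equivalence is full and faithful and each object is isomorphic to one in the image, so there is an object $A$ of $\mathcal{C}$ together with an isomorphism $C\cong\mathbbm{T}A$. Next, because $\mathbbm{T}$ is full and faithful, the remark immediately before this proposition provides a bijection $\mathrm{Arr}\langle A,\Omega\rangle\to\mathrm{Arr}\langle \mathbbm{T}A,\mathbbm{T}\Omega\rangle$ given by $f\mapsto\mathbbm{T}f$. Since $\Omega$ is terminal in $\mathcal{C}$, the set $\mathrm{Arr}\langle A,\Omega\rangle$ has exactly one element, and hence so does $\mathrm{Arr}\langle \mathbbm{T}A,\mathbbm{T}\Omega\rangle$.

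Finally, since $C$ is isomorphic to $\mathbbm{T}A$, Lemma \ref{isomimpbij} gives that $\mathrm{Arr}\langle C,\mathbbm{T}\Omega\rangle$ and $\mathrm{Arr}\langle \mathbbm{T}A,\mathbbm{T}\Omega\rangle$ have the same cardinality, namely one. As $C$ was arbitrary, every object of $\mathcal{C}$ admits exactly one arrow to $\mathbbm{T}\Omega$, which is precisely the assertion that $\mathbbm{T}\Omega$ is a terminal object.

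There is no genuine obstacle in this argument; the only point that deserves a moment's care is ensuring that the notion of equivalence fixed in Definition \ref{defeq} really does yield essential surjectivity, since that is the property doing the work. This is exactly what Lemma \ref{equivequiv}(4) supplies, so I would invoke that characterization explicitly rather than reason from the adjunction. (Alternatively, one could instead use the unit isomorphism of Lemma \ref{equivequiv}(2)--(3): take $A=\mathbbm{R}C$, where $\mathbbm{R}$ is the adjoint of $\mathbbm{T}$, and use the isomorphism $C\cong\mathbbm{T}\mathbbm{R}C$; the rest of the proof is unchanged.)
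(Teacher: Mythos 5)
Your proposal is correct and is essentially the paper's argument: both establish terminality by showing $\mbox{Arr}\langle C,\mathbbm{T}\Omega\rangle$ is a singleton for every object $C$, transporting the count along the equivalence to a hom-set into $\Omega$ via the full-and-faithful hom-bijection together with the fact that every object is isomorphic to something in the image of $\mathbbm{T}$. The paper's own proof is closest to your parenthetical variant: rather than citing essential surjectivity abstractly, it takes the adjoint $\mathbbm{R}$ and chains bijections $\mbox{Arr}\langle C,\mathbbm{T}\Omega\rangle\cong\mbox{Arr}\langle \mathbbm{T}\mathbbm{R}C,\mathbbm{T}\Omega\rangle\cong\mbox{Arr}\langle \mathbbm{R}C,\mathbbm{R}\mathbbm{T}\Omega\rangle\cong\mbox{Arr}\langle \mathbbm{R}C,\Omega\rangle$, which amounts to the same counting argument with $A=\mathbbm{R}C$.
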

\begin{proof}
Let the adjunct of $\mathbbm{T}$ be $\mathbbm{S}$.  Then, we have bijections of sets:
$$\mbox{Arr}\langle X,\mathbbm{T}\omega\rangle\to\mbox{Arr}\langle \mathbbm{T}\mathbbm{R}X,\mathbbm{T}\omega\rangle\to\mbox{Arr}\langle \mathbbm{R}X,\mathbbm{R}\mathbbm{T}\omega\rangle\to\mbox{Arr}\langle X,\mathbbm{T}\omega\rangle$$
the last of which has just one element since $\Omega $ is terminal.  Thus, so too does the first set.  Thus, since $X$ was arbitrary, $\mathbbm{T}\Omega$ is a terminal object.
\end{proof}
\begin{theorem}\label{eqprespts}
Suppose $\mathbbm{T}$ is an equivalence on the category of frames.  Let $L$ be a frame.  Then the cardinality of the set of points of $L$ is the same as the cardinality of the set of points of $\mathbbm{T}L$.
\end{theorem}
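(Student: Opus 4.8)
The plan is to reduce the statement to a single categorical fact: that the equivalence $\mathbbm{T}$ sends the object $T$ to an object isomorphic to $T$. By Definition \ref{defpts} the set of points of a frame $L$ is $\mbox{Arr}\langle L,T\rangle$, so the assertion to be proved is that $\mbox{Arr}\langle L,T\rangle$ and $\mbox{Arr}\langle \mathbbm{T}L,T\rangle$ have the same cardinality. Here $T$ is the initial object of the category of frames (equivalently, the terminal object of the category of locales).

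First I would establish that $\mathbbm{T}T \cong T$, i.e.\ that $\mathbbm{T}$ preserves the initial object of the category of frames. This is the formal dual of Proposition \ref{eqpresterm}, and I would prove it by the same device. By Lemma \ref{equivequiv} there is a functor $\mathbbm{R}$ together with a natural isomorphism $1_{\mathcal{C}} \cong \mathbbm{T}\mathbbm{R}$, so for an arbitrary frame $X$ we have $X \cong \mathbbm{T}\mathbbm{R}X$; using Lemma \ref{isomimpbij} and the full-faithfulness of $\mathbbm{T}$ this yields
$$\mbox{Arr}\langle \mathbbm{T}T, X\rangle \;\cong\; \mbox{Arr}\langle \mathbbm{T}T, \mathbbm{T}\mathbbm{R}X\rangle \;\cong\; \mbox{Arr}\langle T, \mathbbm{R}X\rangle .$$
The rightmost set is a singleton because $T$ is initial, hence so is $\mbox{Arr}\langle \mathbbm{T}T, X\rangle$ for every frame $X$; thus $\mathbbm{T}T$ is an initial object and therefore isomorphic to $T$.

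The remainder is bookkeeping. Since $\mathbbm{T}$ is full and faithful, $p \mapsto \mathbbm{T}p$ is a bijection $\mbox{Arr}\langle L,T\rangle \to \mbox{Arr}\langle \mathbbm{T}L,\mathbbm{T}T\rangle$; and since $\mathbbm{T}T \cong T$, Lemma \ref{isomimpbij} gives a bijection $\mbox{Arr}\langle \mathbbm{T}L,\mathbbm{T}T\rangle \to \mbox{Arr}\langle \mathbbm{T}L,T\rangle$. Composing these shows that $\mbox{Arr}\langle L,T\rangle$ and $\mbox{Arr}\langle \mathbbm{T}L,T\rangle$ are equinumerous, which is exactly the theorem.

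The one point that needs care is variance: one must notice that $T$ plays the role of the \emph{initial} object of the category of frames (equivalently the terminal object of locales), so that it is preserved by the dual of Proposition \ref{eqpresterm} rather than by that proposition itself. Everything else uses only the abstract machinery already assembled in this section, namely Lemmas \ref{equivequiv} and \ref{isomimpbij}; no property peculiar to frames enters.
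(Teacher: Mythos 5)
Your proposal is correct and follows essentially the same route as the paper: points are by definition arrows into $T$, full-faithfulness of $\mathbbm{T}$ gives a bijection $\mbox{Arr}\langle L,T\rangle\to\mbox{Arr}\langle \mathbbm{T}L,\mathbbm{T}T\rangle$, and the identification $\mathbbm{T}T\cong T$ together with Lemma \ref{isomimpbij} completes the count, exactly as in the paper's three-step chain of cardinalities. Your remark on variance is well placed but does not change the method: the paper cites Proposition \ref{eqpresterm} even though $T$ is the initial rather than the terminal object of the category of frames, and you simply supply the dual argument explicitly, which is a small gain in precision rather than a different proof.
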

\begin{proof}
Let $c$ be the cardinality of the set of points of $L$.  Then, by definition, 
\begin{align}
c &= |\mbox{Arr}\langle L,T\rangle|\\\label{cardpts1}
  &= |\mbox{Arr}\langle \mathbbm{T}L,\mathbbm{T}T\rangle|\\\label{cardpts2}
  &= |\mbox{Arr}\langle \mathbbm{T}L, T\rangle|\\\label{cardpts3}
\end{align}
where \ref{cardpts1} follows from definition\ref{defpts}, \ref{cardpts2} follows from \ref{eqpresterm} comgined with \ref{isomimpbij}, by definition, \ref{cardpts3} is the cardinality of the set of points of $\mathbbm{T}L$.
\end{proof}
\begin{proposition}
Let $G$ be a generator in a category $\mathcal{C}$.  Let $\mathbbm{T}$ be an equivalence on $\mathcal{C}$.  Then, $\mathbbm{T}G$ is also a generator.
\end{proposition}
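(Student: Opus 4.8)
The plan is to start from an arbitrary pair of unequal arrows $f,g:A\to B$ in $\mathcal{C}$ and to manufacture an arrow $n:\mathbbm{T}G\to A$ with $f\circ n\ne g\circ n$, using the generating property of $G$ together with the fact that an equivalence is full, faithful, and essentially surjective. Since $\mathbbm{T}$ is an equivalence, part (4) of Lemma \ref{equivequiv} supplies objects $A_{0},B_{0}$ of $\mathcal{C}$ and isomorphisms $\phi:\mathbbm{T}A_{0}\to A$ and $\psi:B\to\mathbbm{T}B_{0}$. First I would transport the pair $f,g$ across these isomorphisms, setting $\bar f=\psi\circ f\circ\phi$ and $\bar g=\psi\circ g\circ\phi$, both arrows $\mathbbm{T}A_{0}\to\mathbbm{T}B_{0}$. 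Because $\phi$ is epic and $\psi$ is monic (both being isomorphisms), $f\ne g$ forces $\bar f\ne\bar g$.

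Next, since $\mathbbm{T}$ is full and faithful there are unique arrows $f_{0},g_{0}:A_{0}\to B_{0}$ with $\mathbbm{T}f_{0}=\bar f$ and $\mathbbm{T}g_{0}=\bar g$, and faithfulness together with $\bar f\ne\bar g$ gives $f_{0}\ne g_{0}$. Now invoke the hypothesis that $G$ is a generator in $\mathcal{C}$: there is an arrow $m:G\to A_{0}$ with $f_{0}\circ m\ne g_{0}\circ m$. Applying $\mathbbm{T}$ and using faithfulness once more, $\mathbbm{T}(f_{0}\circ m)\ne\mathbbm{T}(g_{0}\circ m)$, i.e. $\bar f\circ\mathbbm{T}m\ne\bar g\circ\mathbbm{T}m$, that is $\psi\circ f\circ\phi\circ\mathbbm{T}m\ne\psi\circ g\circ\phi\circ\mathbbm{T}m$. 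Cancelling the monic $\psi$ on the left yields $f\circ(\phi\circ\mathbbm{T}m)\ne g\circ(\phi\circ\mathbbm{T}m)$, so $n:=\phi\circ\mathbbm{T}m:\mathbbm{T}G\to A$ is the required arrow, and $\mathbbm{T}G$ is a generator.

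The routine points to check are that isomorphisms are both monic and epic (so that composing with them on either side preserves and reflects inequality of parallel arrows) and that a faithful functor reflects inequality of arrows; neither is difficult. The only step that requires a genuine appeal to the structure of an equivalence is the essential surjectivity used to produce $A_{0},B_{0},\phi,\psi$, which I expect to be the main (mild) obstacle; it is handled by Lemma \ref{equivequiv}(4), available since $\mathbbm{T}$ is an equivalence in the sense of Definition \ref{defeq}. An alternative route would avoid essential surjectivity of $\mathbbm{T}$ itself by working instead with the adjoint $\mathbbm{R}$ and the unit and counit isomorphisms of Lemma \ref{equivequiv}(2), but the argument above seems the most direct.
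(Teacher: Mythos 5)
Your argument is correct, but it follows a genuinely different route from the paper's. You work entirely on the ``image'' side of the equivalence: you invoke essential surjectivity (Lemma \ref{equivequiv}(4)) to replace $A$ and $B$ up to isomorphism by $\mathbbm{T}A_{0}$ and $\mathbbm{T}B_{0}$, use fullness to lift the transported pair to $f_{0},g_{0}:A_{0}\to B_{0}$, apply the generating property of $G$ there, and then push the separating arrow forward with $\mathbbm{T}$, using faithfulness to preserve inequality and the monic/epic cancellation of the isomorphisms to return to the original pair $f,g$. The paper instead never leaves the adjoint formulation: it applies the adjoint $\mathbbm{R}$ to $f,g$ (the hom-set bijection keeps them unequal), separates $\mathbbm{R}f$ from $\mathbbm{R}g$ by an arrow $G\to\mathbbm{R}A$ coming from the generating property of $G$, and then transposes that arrow across the adjunction bijection $\mbox{Arr}\langle\mathbbm{T}G,A\rangle\cong\mbox{Arr}\langle G,\mathbbm{R}A\rangle$ to obtain the required arrow $\mathbbm{T}G\to A$; the fact that the transpose still separates $f$ from $g$ rests on the naturality of that bijection, which the paper leaves implicit. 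Your version is more self-contained and makes every verification explicit (faithful functors reflect inequality; isomorphisms cancel), at the price of needing fullness and essential surjectivity rather than just the adjunction; the paper's version is shorter but quietly depends on naturality, exactly the point your closing remark about the ``alternative route via $\mathbbm{R}$'' would have to address if you pursued it. One tiny remark: deducing $f_{0}\ne g_{0}$ from $\mathbbm{T}f_{0}\ne\mathbbm{T}g_{0}$ needs only functoriality, not faithfulness; faithfulness is genuinely used only in the later step where you conclude $\mathbbm{T}(f_{0}\circ m)\ne\mathbbm{T}(g_{0}\circ m)$.
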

\begin{proof}
Let\\
\centerline{
\xymatrix{
A\ar@<.5ex>[r]^{f}\ar@<-.5ex>[r]_{g} & B
}
}
be arbitrary arrows in $\mathcal{C}$ and suppose $f\ne g$.  Then since $\mbox{Arr}\langle A,B\rangle$ is bijective to $\mbox{Arr}\langle \mathbbm{R}A,\mathbbm{R}B\rangle$, $\mathbbm{R}f\ne \mathbbm{R}g$.  Thus,since $G$ is a generator, there exists an arrow $\phi:G\to RA$ so that $\mathbbm{R}f\circ\phi\ne Rg\circ\phi$.  Since $\mathbbm{T}$ is an equivalence it possesses a left adjoint $\mathbbm{R}$. Thus $\mbox{Arr}\langle \mathbbm{T}G,B\rangle$ is bijective to $\mbox{Arr}\langle G,\mathbbm{R}B\rangle$.  Thus, there exists an arrow $\psi$ which is the image of $\phi$ under this bijection so that $f\circ\psi\ne g\circ\psi$.  Thus, since $A$, $B$, $f$ and $g$ were all arbitrary, $\mathbbm{T}G$ is a generator in $\mathcal{C}$.  
\end{proof}
Here we prove a brief lemma which is primarily a side-note
\begin{lemma}
Let $\mathcal{L}$ be a category.  Let $\mathbbm{T}$ be an equivalence on $\mathcal{L}$ and $\mathbbm{R}$ its right adjoint.  Then, $\mathbbm{R}\mathbbm{T}$ and $\mathbbm{T}\mathbbm{R}$ are naturally isomorphic.
\end{lemma}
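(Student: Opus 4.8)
The plan is to leverage Lemma~\ref{equivequiv}. Since $\mathbbm{T}$ is an equivalence on $\mathcal{L}$ and $\mathbbm{R}$ is its right adjoint, part (2) of that lemma tells us that the two canonical natural transformations of the adjunction, $\eta:1_{\mathcal{L}}\Rightarrow\mathbbm{R}\mathbbm{T}$ and $\epsilon:1_{\mathcal{L}}\Rightarrow\mathbbm{T}\mathbbm{R}$, are both natural isomorphisms. The strategy is then simply to glue $\epsilon$ to the inverse of $\eta$ and check that the result is again a natural isomorphism.

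First I would record that $\eta$ being a natural isomorphism means each component $\eta_{X}:X\to\mathbbm{R}\mathbbm{T}X$ is an isomorphism in $\mathcal{L}$; let $\eta_{X}^{-1}$ denote its inverse. I claim the family $\{\eta_{X}^{-1}\}$ is a natural transformation $\mathbbm{R}\mathbbm{T}\Rightarrow 1_{\mathcal{L}}$: for any $f:X\to Y$ the naturality square for $\eta$ reads $\mathbbm{R}\mathbbm{T}f\circ\eta_{X}=\eta_{Y}\circ f$, and pre- and post-composing with the appropriate inverses turns this into $f\circ\eta_{X}^{-1}=\eta_{Y}^{-1}\circ\mathbbm{R}\mathbbm{T}f$, which is exactly the naturality square for $\eta^{-1}$. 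Each component being an isomorphism, $\eta^{-1}$ is a natural isomorphism.

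Next I would form the vertical composite $\alpha:=\epsilon\circ\eta^{-1}:\mathbbm{R}\mathbbm{T}\Rightarrow 1_{\mathcal{L}}\Rightarrow\mathbbm{T}\mathbbm{R}$, with components $\alpha_{X}=\epsilon_{X}\circ\eta_{X}^{-1}$. Its naturality squares are obtained by pasting the naturality squares of $\eta^{-1}$ and of $\epsilon$ side by side, so $\alpha$ is a natural transformation; and each $\alpha_{X}$ is a composite of two isomorphisms, hence an isomorphism. Therefore $\alpha$ is a natural isomorphism $\mathbbm{R}\mathbbm{T}\Rightarrow\mathbbm{T}\mathbbm{R}$, which is the assertion.

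There is no genuine obstacle here; the only point requiring a moment's care is the bookkeeping of directions --- that the map $\epsilon$ furnished by Lemma~\ref{equivequiv}(2) is oriented $1_{\mathcal{L}}\Rightarrow\mathbbm{T}\mathbbm{R}$, so that it composes with $\eta^{-1}$ directly without first being inverted, and that one must invert $\eta$ (not $\epsilon$) to make the domains and codomains match up. If one instead starts from part (3) of Lemma~\ref{equivequiv}, the argument is identical, using the two natural isomorphisms supplied there in place of $\eta$ and $\epsilon$.
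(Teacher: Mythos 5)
Your proposal is correct and follows essentially the same route as the paper: both arguments take the two canonical natural isomorphisms $\eta:1\Rightarrow\mathbbm{R}\mathbbm{T}$ and $\epsilon:1\Rightarrow\mathbbm{T}\mathbbm{R}$ and compose one with the inverse of the other, the paper writing $\eta\epsilon^{-1}:\mathbbm{T}\mathbbm{R}\Rightarrow\mathbbm{R}\mathbbm{T}$ while you form the inverse composite $\epsilon\circ\eta^{-1}:\mathbbm{R}\mathbbm{T}\Rightarrow\mathbbm{T}\mathbbm{R}$, which is the same isomorphism read in the opposite direction. Your write-up merely spells out the routine checks (naturality of $\eta^{-1}$ and of the vertical composite) that the paper leaves to its commutative diagram.
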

\begin{proof}
Let $f:A\to B$ be an arrow in $\mathcal{L}$.  Then,\\
\centerline{
\xymatrix{
\mathbbm{T}\mathbbm{R}A\ar[d]_{\mathbbm{T}\mathbbm{R}f} & A\ar[l]_{\epsilon_{A}}\ar[d]|{f}\ar[l]_{\epsilon_{A}}\ar[r]^{\eta{A}} & \mathbbm{R}\mathbbm{T}\ar[d]^{\mathbbm{R}\mathbbm{T}f}\\
\mathbbm{T}\mathbbm{R}B & B\ar[l]^{\epsilon_{B}}\ar[r]_{\eta_{B}} & \mathbbm{R}\mathbbm{T}B
}
}
commutes, so we have that
$$\eta\epsilon^{-1}:\mathbbm{T}\mathbbm{R}\Rightarrow\mathbbm{R}\mathbbm{T}$$
is an isomorphism.
\end{proof}
\begin{theorem}\label{eqpressums}
Let $\mathbbm{T}$ be an equivalence on a category $\mathcal{L}$.  If \\
\centerline{
\xymatrix{
A\ar[dr]^{i}\\
& A+A\\
A\ar[ur]_{j}
}
}
is a sum diagram, so is\\
\centerline{
\xymatrix{
\mathbbm{T}A\ar[dr]^{\mathbbm{T}i}\\
& \mathbbm{T}(A+A)\\
\mathbbm{T}A\ar[ur]_{\mathbbm{T}j}
}
}
\end{theorem}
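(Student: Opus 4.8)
The plan is to verify the universal property of the coproduct by hand, exploiting the fact that an equivalence is full, faithful, and --- by part (4) of Lemma~\ref{equivequiv} --- essentially surjective. I would first reduce the problem to cocones whose vertex already has the form $\mathbbm{T}B$. Given an arbitrary object $X$ and an isomorphism $\sigma:X\to\mathbbm{T}B$ (which exists by Lemma~\ref{equivequiv}(4)), a pair of arrows $u,v:\mathbbm{T}A\to X$ factors uniquely through $\mathbbm{T}i,\mathbbm{T}j$ if and only if the pair $\sigma\circ u,\sigma\circ v:\mathbbm{T}A\to\mathbbm{T}B$ does, since one simply pre- or post-composes a candidate mediating arrow with $\sigma$ or $\sigma^{-1}$. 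So it suffices to treat the case $X=\mathbbm{T}B$.

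Next, given $u,v:\mathbbm{T}A\to\mathbbm{T}B$, I would use fullness and faithfulness: the assignment $h\mapsto\mathbbm{T}h$ is a bijection $\mbox{Arr}\langle A,B\rangle\to\mbox{Arr}\langle\mathbbm{T}A,\mathbbm{T}B\rangle$, so there are unique arrows $\bar u,\bar v:A\to B$ with $\mathbbm{T}\bar u=u$ and $\mathbbm{T}\bar v=v$. Since $A\xrightarrow{i}A+A\xleftarrow{j}A$ is a sum, there is a unique $\bar w:A+A\to B$ with $\bar w\circ i=\bar u$ and $\bar w\circ j=\bar v$. Setting $w=\mathbbm{T}\bar w$, functoriality gives $w\circ\mathbbm{T}i=\mathbbm{T}(\bar w\circ i)=\mathbbm{T}\bar u=u$ and likewise $w\circ\mathbbm{T}j=v$, which establishes existence of a mediating arrow.

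For uniqueness, suppose $w'$ also satisfies $w'\circ\mathbbm{T}i=u$ and $w'\circ\mathbbm{T}j=v$. By fullness $w'=\mathbbm{T}\overline{w'}$ for an arrow $\overline{w'}:A+A\to B$; then $\mathbbm{T}(\overline{w'}\circ i)=w'\circ\mathbbm{T}i=u=\mathbbm{T}\bar u$, and faithfulness forces $\overline{w'}\circ i=\bar u$, and similarly $\overline{w'}\circ j=\bar v$. Uniqueness in the original sum diagram then gives $\overline{w'}=\bar w$, hence $w'=\mathbbm{T}\overline{w'}=\mathbbm{T}\bar w=w$. Combined with the reduction of the first paragraph, this shows $\mathbbm{T}A\xrightarrow{\mathbbm{T}i}\mathbbm{T}(A+A)\xleftarrow{\mathbbm{T}j}\mathbbm{T}A$ has the universal property of a sum.

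I do not expect a genuine obstacle here: the argument is essentially a diagram chase, and the only step requiring care is the reduction to vertices of the form $\mathbbm{T}B$, which is precisely where essential surjectivity is used. (One could instead argue abstractly that an equivalence, having a right adjoint by Lemma~\ref{equivequiv}, is itself a left adjoint and so preserves all colimits, in particular finite coproducts; but since the principle ``left adjoints preserve colimits'' has not been recalled in the text, I prefer the explicit verification above.)
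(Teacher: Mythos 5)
Your argument is correct, but it follows a genuinely different route from the paper's. The paper never reduces to cocones with vertex of the form $\mathbbm{T}B$: it takes an arbitrary vertex $Z$ with arrows $w,z:\mathbbm{T}A\to Z$, transports them into the source category as $\mathbbm{R}w\circ\eta_{A},\ \mathbbm{R}z\circ\eta_{A}:A\to\mathbbm{R}Z$ using the adjoint $\mathbbm{R}$ and the unit $\eta$, obtains the unique $\phi:A+A\to\mathbbm{R}Z$ from the original sum diagram, and then produces the mediating arrow $\epsilon^{-1}_{Z}\circ\mathbbm{T}\phi:\mathbbm{T}(A+A)\to Z$, verifying commutativity and uniqueness by naturality chases with $\eta$ and $\epsilon^{-1}$. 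You instead invoke condition (4) of Lemma \ref{equivequiv}: essential surjectivity lets you replace the vertex $X$ by $\mathbbm{T}B$ (your transport of the universal property along the isomorphism $\sigma$ is handled correctly, including uniqueness), and then the hom-set bijection $\mbox{Arr}\langle A,B\rangle\to\mbox{Arr}\langle \mathbbm{T}A,\mathbbm{T}B\rangle$ given by fullness and faithfulness does all the work, with faithfulness giving uniqueness. What each buys: your version is the standard, more elementary proof that equivalences preserve arbitrary (co)limits, and it avoids the somewhat delicate diagrams involving $\epsilon^{-1}$ that occupy most of the paper's proof; its only extra cost is the reduction step, which you must (and do) justify. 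The paper's version needs no reduction and works with any $Z$ directly, and it exercises the unit/counit machinery that the paper has already set up and reuses later (e.g.\ in Lemma \ref{eqbijpresord}); it is in effect the specialization to equivalences of the principle that left adjoints preserve colimits, which, as you note, is never recalled in the text, so your decision to verify the universal property explicitly rather than cite that principle is the right call in context.
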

\begin{proof}
Let $Z$ be any object of $\mathcal{L}$ and consider the diagram\\
\centerline{
\xymatrix{
\mathbbm{T}A\ar[dr]^{\mathbbm{T}i}\ar@/^/[drr]^{w}\\
& \mathbbm{T}(A+A)& Z\\
\mathbbm{T}A\ar[ur]_{\mathbbm{T}j}\ar@/_/[urr]_{z}
}
}
Then, we have\\
\centerline{
\xymatrix{
A\ar[r]^{\eta_{A}}&\mathbbm{R}\mathbbm{T}A\ar[dr]^{\mathbbm{T}i}\ar@/^/[drr]^{w}\\
&& \mathbbm{T}(A+A)& Z\\
A\ar[r]^{\eta_{A}}&\mathbbm{R}\mathbbm{T}A\ar[ur]_{\mathbbm{T}j}\ar@/_/[urr]_{z}
}
}
Thus,\\
\centerline{
\xymatrix{
A\ar[dr]|{i}\ar@/^/[drr]^{Rw\circ\eta_{A}}\\
& A+A & \mathbbm{R}Z\\
A\ar[ur]|{j}\ar@/_/[urr]^{Rz\circ\eta_{A}}\\
}
}
and so there exists a unique $\phi$ so that\\
\centerline{
\xymatrix{
A\ar[dr]|{i}\ar@/^/[drr]^{Rw\circ\eta_{A}}\\
& A+A\ar@*{[red]}[r]|{\phi} & \mathbbm{R}Z\\
A\ar[ur]|{j}\ar@/_/[urr]_{Rz\circ\eta_{A}}\\
}
}
commutes.  Then, we have the diagram\\
\centerline{
\xymatrix{
\mathbbm{T}A\ar[dr]|{\mathbbm{T}i}\ar@/^/[drrr]^{w}\\
& \mathbbm{T}(A+A)\ar@*{[red]}[r]|{\mathbbm{T}\phi} & \mathbbm{T}\mathbbm{R}Z\ar[r]|{\epsilon^{1}_{Z}} & Z\\
\mathbbm{T}A\ar[ur]|{\mathbbm{T}j}\ar@/_/[urrr]_{z}\\
}
}
which commutes since\\
\begin{align}
\epsilon^{-1}_{Z}\mathbbm{T}\phi\mathbbm{T}i &= \epsilon^{-1}_{Z}\mathbbm{T}\mathbbm{R}w\mathbbm{T}\eta_{A}\\
&= w\epsilon^{-1}_{\mathbbm{T}A}\\
&= w
\end{align}
by commutativity of\\
\centerline{
\xymatrix{
\mathbbm{T}A\ar[r]^{\epsilon_{\mathbbm{T}A}}\ar[d]_{w} & \mathbbm{T}\mathbbm{R}A\ar[d]|{\mathbbm{T}\mathbbm{R}w} & \mathbbm{T}A\ar[d]^{w}\ar[l]_{\mathbbm{T}\eta_{A}}\\
Z\ar[r]_{\epsilon_{Z}} & \mathbbm{T}\mathbbm{R}Z & Z\ar[l]_{\epsilon^{-1}_{Z}}
}
}
A similar argument shows that 
$$\epsilon^{-1}_{Z}\mathbbm{T}\phi\mathbbm{T}j = z$$
Now, suppose
\begin{align}
\psi\mathbbm{T}i &= w
\end{align}
Then,
\begin{align}
\mathbbm{R}\psi\mathbb{R}\mathbbm{T}i &= \mathbbm{R}w\\
\mathbbm{R}\psi\mathbb{R}\mathbbm{T}i\eta_{A} &= \mathbbm{R}w\eta_{A}\\
\mathbbm{R}\psi\eta_{A+A}i &= \mathbbm{R}w\eta_{A}\\
\mathbbm{R}\psi\eta_{A+A} &= \phi\\
\mathbbm{T}\mathbbm{R}\psi\eta_{A+A} = \mathbbm{T}\phi\\
\mathbbm{T}\mathbbm{R}\psi\eta_{A+A}\epsilon^{-1}_{Z} = \mathbbm{T}\phi\epsilon^{-1}_{Z}\\
\psi = \mathbbm{T}\phi\epsilon^{-1}_{Z}
\end{align}
by commutativity of the diagrams\\
\centerline{
\xymatrix{
A\ar[r]^{\eta_{A}}\ar[d]_{i} & \mathbbm{R}\mathbbm{T}A\ar[d]^{\mathbbm{R}\mathbbm{T}i}\\
A+A\ar[r]_{\eta_{A+A}} & \mathbbm{R}\mathbbm{T}(A+A)
}
}
and
\centerline{
\xymatrix{
\mathbbm{T}(A+A)\ar[r]^{\eta_{A+A}}\ar[d]_{i} & \mathbbm{T}\mathbbm{R}\mathbbm{T}(A+A)\ar[d]\\
Z\ar[r]_{\epsilon_{Z}} & \mathbbm{T}\mathbbm{T}(Z)
}
}
\end{proof}
\begin{corollary}
Let $\triangledown_{A}$ be the codiagonal arrow of the object $A$ in the category $\mathcal{L}$.  Let $\mathbbm{T}$ be an equivalence on $\mathcal{L}$.  Then,
$$\mathbbm{T}\triangledown_{A} = \triangledown_{\mathbbm{T}A}$$
\end{corollary}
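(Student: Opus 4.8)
The plan is to reduce the whole statement to the universal property of the coproduct, transporting that universal property across $\mathbbm{T}$ by means of Theorem \ref{eqpressums}. Recall that, by definition, the codiagonal $\triangledown_A : A+A \to A$ is the \emph{unique} arrow satisfying $\triangledown_A \circ i = 1_A$ and $\triangledown_A \circ j = 1_A$, where $i,j : A \to A+A$ are the two coproduct injections appearing in the sum diagram of Theorem \ref{eqpressums}.

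First I would simply apply the functor $\mathbbm{T}$ to these two defining equations. Because a functor preserves composition and identities, this yields at once $\mathbbm{T}\triangledown_A \circ \mathbbm{T}i = \mathbbm{T}(1_A) = 1_{\mathbbm{T}A}$ and, in the same way, $\mathbbm{T}\triangledown_A \circ \mathbbm{T}j = 1_{\mathbbm{T}A}$. Thus $\mathbbm{T}\triangledown_A$ is an arrow $\mathbbm{T}(A+A) \to \mathbbm{T}A$ whose composites with $\mathbbm{T}i$ and $\mathbbm{T}j$ are both the identity on $\mathbbm{T}A$.

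Next I would invoke Theorem \ref{eqpressums}, which asserts that $\bigl(\mathbbm{T}(A+A),\, \mathbbm{T}i,\, \mathbbm{T}j\bigr)$ is itself a sum diagram for $\mathbbm{T}A$ with $\mathbbm{T}A$. Consequently the codiagonal $\triangledown_{\mathbbm{T}A}$, taken with respect to this coproduct structure, is by definition the unique arrow $\mathbbm{T}(A+A) \to \mathbbm{T}A$ whose composites with the injections $\mathbbm{T}i$ and $\mathbbm{T}j$ are both $1_{\mathbbm{T}A}$. Since the previous paragraph shows that $\mathbbm{T}\triangledown_A$ satisfies exactly this defining property, the uniqueness clause of the universal property forces $\mathbbm{T}\triangledown_A = \triangledown_{\mathbbm{T}A}$, which is the claim.

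I do not expect a genuine obstacle in this argument; the only point that warrants a word of care is a bookkeeping one, namely that ``the'' codiagonal $\triangledown_{\mathbbm{T}A}$ must be read relative to the coproduct structure on $\mathbbm{T}(A+A)$ supplied by Theorem \ref{eqpressums}, rather than relative to some a priori independent realization of a coproduct of $\mathbbm{T}A$ with itself. Since any two such coproducts are identified by a unique compatible isomorphism, this causes no difficulty, and the equality as stated is precisely the one obtained under that identification.
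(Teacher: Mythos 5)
Your proof is correct and follows essentially the same route as the paper: apply $\mathbbm{T}$ to the defining commutative diagram of $\triangledown_A$, use functoriality to get the identities on $\mathbbm{T}A$, and conclude by the uniqueness clause of the coproduct universal property for $\bigl(\mathbbm{T}(A+A),\mathbbm{T}i,\mathbbm{T}j\bigr)$. If anything, you are slightly more careful than the paper in explicitly citing Theorem \ref{eqpressums} to justify that this triple is indeed a sum diagram, a point the paper's proof uses only implicitly by virtue of the statement being a corollary of that theorem.
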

\begin{proof}
Recall that, by definition, $\triangledown_{A}$ is the unique arrow which makes the diagram\\
\centerline{
\xymatrix{
A\ar[dr]|{i}\ar@/^/[drr]^{1_{A}}\\
& A+A\ar@*{[red]}[r]|{\triangledown_{A}} & A\\
A\ar[ur]|{j}\ar@/_/[urr]_{1_{A}}
}
}
commute, where $A+A$ is the sum of $A$ with itself. Functors preserve composition and identity arrows. So, \\
\centerline{
\xymatrix{
\mathbbm{T}A\ar[dr]|{\mathbbm{T}i}\ar@/^/[drr]^{\mathbbm{T}1_{A}=1_{\mathbbm{T}A}}\\
&\mathbbm{T}(A+A)\ar@*{[red]}[r]|{\mathbbm{T}\triangledown_{A}} & \mathbb{T}A\\
\mathbbm{T}A\ar[ur]|{\mathbbm{T}j}\ar@/_/[urr]_{\mathbbm{T}1_{A}=1_{\mathbbm{T}A}}
}
}
commutes. But, $\triangledown_{\mathbbm{T}A}$ is the unique arrow so that\\
\centerline{
\xymatrix{
\mathbbm{T}A\ar[dr]|{\mathbbm{T}i}\ar@/^/[drr]^{\mathbbm{T}1_{A}=1_{\mathbbm{T}A}}\\
&\mathbbm{T}(A+A)\ar@*{[red]}[r]|{\triangledown_{\mathbbm{T}A}} & \mathbb{T}A\\
\mathbbm{T}A\ar[ur]|{\mathbbm{T}j}\ar@/_/[urr]_{\mathbbm{T}1_{A}=1_{\mathbbm{T}A}}
}
}
commute.  Thus, 
$$\mathbbm{T}\triangledown_{A} = \triangledown_{\mathbbm{T}A}$$
\end{proof}
We end this section with the last of the technical lemmas having to do with equivalences.
\begin{lemma}
Let\\
\centerline{
\xymatrix{
A\ar@<.5ex>[r]^{f}\ar@<-.5ex>[r]_{g} & B
}
}
be a pair of arrows in a category and let $\mathbbm{T}$ be an equivalence on that category.  Let $f+g$ be the unique arrow so that\\
\centerline{
\xymatrix{
A\ar[dr]^{i_{S}}\ar[rrr]^{f} &&& B\ar[dl]_{i_{L}}\\
& A+A\ar[r]^{f+g} & B+B\\
A\ar[ur]^{j_{S}}\ar[rrr]_{g} &&& B\ar[ul]^{j_{L}}
}
}
commutes.  Then, $\mathbbm{T}(f+g) = \mathbbm{T}(f)+\mathbbm{T}(g)$.
\end{lemma}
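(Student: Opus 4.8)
The plan is to show that $\mathbbm{T}(f+g)$ satisfies exactly the universal property that defines $\mathbbm{T}(f)+\mathbbm{T}(g)$, and then invoke uniqueness. First I would recall Theorem~\ref{eqpressums}: since $\mathbbm{T}$ is an equivalence, it carries the sum diagram for $A+A$ to a sum diagram, so $\mathbbm{T}(A+A)$, equipped with the injections $\mathbbm{T}i_{S}$ and $\mathbbm{T}j_{S}$, is a coproduct of $\mathbbm{T}A$ with itself; likewise $\mathbbm{T}(B+B)$ with $\mathbbm{T}i_{L}$ and $\mathbbm{T}j_{L}$ is a coproduct of $\mathbbm{T}B$ with itself. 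Thus, up to the canonical isomorphism identifying the chosen object ``$\mathbbm{T}A+\mathbbm{T}A$'' with $\mathbbm{T}(A+A)$ (and similarly on the $B$ side), the arrow $\mathbbm{T}(f)+\mathbbm{T}(g)$ is \emph{by definition} the unique arrow $h:\mathbbm{T}(A+A)\to\mathbbm{T}(B+B)$ with $h\circ\mathbbm{T}i_{S}=\mathbbm{T}i_{L}\circ\mathbbm{T}f$ and $h\circ\mathbbm{T}j_{S}=\mathbbm{T}j_{L}\circ\mathbbm{T}g$.

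Next I would apply the functor $\mathbbm{T}$ to the defining commuting diagram of $f+g$. Functors preserve composition, so from $(f+g)\circ i_{S}=i_{L}\circ f$ we get
$$\mathbbm{T}(f+g)\circ\mathbbm{T}i_{S}=\mathbbm{T}\bigl((f+g)\circ i_{S}\bigr)=\mathbbm{T}(i_{L}\circ f)=\mathbbm{T}i_{L}\circ\mathbbm{T}f,$$
and, symmetrically, from $(f+g)\circ j_{S}=j_{L}\circ g$ we get $\mathbbm{T}(f+g)\circ\mathbbm{T}j_{S}=\mathbbm{T}j_{L}\circ\mathbbm{T}g$. Hence $\mathbbm{T}(f+g)$ is an arrow $\mathbbm{T}(A+A)\to\mathbbm{T}(B+B)$ satisfying the two equations displayed at the end of the previous paragraph.

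Finally I would invoke the uniqueness clause in the universal property of the coproduct $\mathbbm{T}(A+A)$ — which we are entitled to do precisely because Theorem~\ref{eqpressums} tells us $\mathbbm{T}(A+A)$ \emph{is} such a coproduct — to conclude that $\mathbbm{T}(f+g)$ coincides with the arrow called $\mathbbm{T}(f)+\mathbbm{T}(g)$. I expect the only real point requiring care is the bookkeeping of the canonical isomorphisms between $\mathbbm{T}(A+A)$ and the officially chosen coproduct $\mathbbm{T}A+\mathbbm{T}A$: one must check that these isomorphisms intertwine $\mathbbm{T}(f+g)$ with $\mathbbm{T}(f)+\mathbbm{T}(g)$. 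This is routine, since any two coproducts of the same pair are uniquely isomorphic compatibly with their injections, so the identification is forced and the equation $\mathbbm{T}(f+g)=\mathbbm{T}(f)+\mathbbm{T}(g)$ holds under it; alternatively one may simply take $\mathbbm{T}(A+A)$ and $\mathbbm{T}(B+B)$ to be the chosen coproducts, after which the argument above is exactly a repetition of the proof of the preceding corollary on codiagonals.
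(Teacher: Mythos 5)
Your proposal is correct and follows essentially the same route as the paper: apply $\mathbbm{T}$ to the defining diagram of $f+g$, use Theorem~\ref{eqpressums} to know that $\mathbbm{T}(A+A)$ and $\mathbbm{T}(B+B)$ with the images of the injections are again sum diagrams, and conclude by the uniqueness clause of the coproduct's universal property. Your extra remark about tracking the canonical isomorphism with a separately chosen coproduct $\mathbbm{T}A+\mathbbm{T}A$ is a careful refinement of a point the paper simply elides by writing $\mathbbm{T}(A+A)=\mathbbm{T}(A)+\mathbbm{T}(A)$, but the argument is otherwise the same.
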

\begin{proof}
Because $\mathbbm{T}$ preserves composition and because (\ref{eqpressums}) $\mathbbm{T}(A+A) = \mathbbm{T}(A)+\mathbbm{T}(A)$, we have that\\
\centerline{
\xymatrix{
\mathbbm{T}A\ar[dr]^{\mathbbm{T}i_{\mathbbm{T}A}}\ar[rrr]^{\mathbbm{T}f} &&& \mathbbm{T}B\ar[dl]_{\mathbbm{T}i_{\mathbbm{T}B}}\\
& \mathbbm{T}A+A\ar[r]^{\mathbbm{T}(f+g)} & \mathbbm{T}(B+B)\\
\mathbbm{T}A\ar[ur]^{\mathbbm{T}j_{\mathbbm{T}A}}\ar[rrr]_{\mathbbm{T}g} &&& \mathbbm{T}B\ar[ul]^{\mathbbm{T}j_{\mathbbm{T}B}}
}
} 
is a commutative diagram of the sums of $\mathbbm{T}A$ with itself and $\mathbbm{T}B$ with itself.  But $\mathbbm{T}f + \mathbbm{T}g$ is the unique such arrow from $\mathbbm{T}A+\mathbbm{T}A$ to $\mathbbm{T}B+\mathbbm{T}B$ making the diagram commute.  Thus it must be that 
$$\mathbbm{T}(f+g) = \mathbbm{T}(f)+\mathbbm{T}(g)$$
\end{proof}

\section{If $L$ has Four Elements}
In this section we show that if $L$ has four elements then $L$ has four or more automorphisms.
\begin{proposition}
Let $L$ be a locale and suppose that the cardinality of $L$, (which we will denote by $|L|$) is equal to four.  Then $\mbox{Arr}\langle L,L\rangle\geq 4$ as well.
\end{proposition}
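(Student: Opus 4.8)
The plan is to enumerate, up to isomorphism, the frames of cardinality four. A frame of cardinality four is a finite distributive lattice with top $1$ and bottom $0$, so it has exactly two intermediate elements, say $b$ and $c$. There are only two possibilities: either $b$ and $c$ are comparable, giving the chain $0 < b < c < 1$, or they are incomparable, giving the ``diamond'' $0 < b,c < 1$ with $b \wedge c = 0$ and $b \vee c = 1$ (the Boolean algebra on two atoms). Both are genuine frames (every finite distributive lattice is a frame, the infinite distributive law reducing to the finite one). So the proof splits into these two cases, and in each I exhibit at least four frame endomorphisms $L \to L$.

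For the diamond $L = \{0,b,c,1\}$, the key observation is that the map swapping $b \leftrightarrow c$ and fixing $0,1$ is a lattice automorphism (it preserves $\wedge$ and $\vee$ by the symmetry of the defining relations), hence a frame automorphism by Lemma \ref{partordisomimplisframeisom}. Together with the identity this already gives two automorphisms; to reach four arrows I add the two ``constant-on-the-middle'' idempotents, namely $e_b$ sending $a \mapsto b$ more precisely: the endomorphism fixing $0,1$ and collapsing both $b$ and $c$ to $b$, and similarly $e_c$ collapsing both to $c$. Each of these preserves finite meets and arbitrary joins (one checks the finitely many cases, exactly as in Lemma \ref{agoesanywhere}), so $\mathrm{Arr}\langle L,L\rangle$ contains $1_L$, the swap, $e_b$, and $e_c$ — four distinct arrows. (In fact there are more, but four suffices.)

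For the chain $L = \{0 < b < c < 1\}$, every frame endomorphism must fix $0$ and $1$ and send the chain into itself monotonically, so it is determined by the pair $(\varphi(b),\varphi(c))$ with $\varphi(b) \le \varphi(c)$. The candidates I would write down are: the identity $1_L$; the map $b \mapsto 0,\ c \mapsto 0$; the map $b \mapsto 0,\ c \mapsto c$; and the map $b \mapsto 1,\ c \mapsto 1$ (equivalently one may use $b\mapsto b, c\mapsto 1$). Each is order-preserving with $0,1$ fixed; since $L$ is totally ordered, meets and joins are just min and max, and any order-preserving self-map of a chain that fixes the endpoints automatically preserves min and max and hence is a frame arrow. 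That yields four distinct elements of $\mathrm{Arr}\langle L,L\rangle$.

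The main obstacle — really the only thing requiring care — is the classification step: justifying that up to isomorphism the chain and the diamond are the \emph{only} four-element frames, and that each proposed map is actually a frame arrow rather than merely order-preserving. The first point follows because a bounded lattice on four elements has two remaining elements whose only possible order relations are ``comparable'' or ``incomparable,'' and one must check the incomparable case forces the diamond (using that the lattice is a lattice, so $b\vee c$ and $b \wedge c$ exist and, having nowhere else to go, equal $1$ and $0$). The second point is a short finite verification of the distributive/join-preservation identities for each map, of exactly the routine flavor already carried out in Lemma \ref{agoesanywhere}; I would present one representative check and remark that the rest are identical. Once both cases are handled, $|\mathrm{Arr}\langle L,L\rangle| \ge 4$ in every case, completing the proof.
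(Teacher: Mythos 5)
Your chain case is fine (and the observation that an order-preserving self-map of a finite chain fixing $0$ and $1$ automatically preserves min and max is a legitimate shortcut past the exhaustive verification), but your diamond case contains a genuine error. The maps $e_b$ and $e_c$ that collapse both middle elements to $b$ (respectively to $c$) are \emph{not} frame arrows: in the diamond one has $b\wedge c=0$ and $b\vee c=1$, so $e_b(b\wedge c)=e_b(0)=0$ while $e_b(b)\wedge e_b(c)=b\wedge b=b\neq 0$, and likewise $e_b(b\vee c)=1$ while $e_b(b)\vee e_b(c)=b\neq 1$. The "finitely many cases" you defer to Lemma \ref{agoesanywhere} actually fail here, so in the diamond case you have exhibited only two valid arrows (the identity and the swap), not four, and the proposition is not proved.

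The repair is exactly what the paper does in its second case: besides $1_L$ and the swap $b\leftrightarrow c$, use the two maps sending the pair $(b,c)$ to $(1,0)$ and to $(0,1)$ respectively. These do preserve the critical relations, since $b\wedge c=0\mapsto 0=1\wedge 0$ and $b\vee c=1\mapsto 1=1\vee 0$, and one checks the remaining finitely many meets and joins directly; indeed a frame endomorphism of the diamond must send $b,c$ to a complementary pair, so these four maps are precisely all of $\mbox{Arr}\langle L,L\rangle$ in that case. With that substitution your case split (chain versus diamond, which is the same decomposition the paper uses) goes through and yields $|\mbox{Arr}\langle L,L\rangle|\geq 4$ in both cases.
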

\begin{proof}
We proceed by cases:\\
\underline{Case I:}Suppose $L = \lbrace 0 < a < b < 1\rbrace$ is a total order. The following functions are all arrows in the category of frames.
\begin{enumerate}
\item 
\begin{equation}
1_{L}(x) = \left\{ \begin{array}{cc}
               1 & \mbox{for $x = 1$}\\
               a & \mbox{for $x = a$}\\
               b & \mbox{for $x = b$}\\
               0 & \mbox{for $x = 0$}
               \end{array}
\right.
\end{equation}
\item 
\begin{equation}
f(x) = \left\{ \begin{array}{cc}
               1 & \mbox{for $x = 1$}\\
               1 & \mbox{for $x = a$}\\
               0 & \mbox{for $x = b$}\\
               0 & \mbox{for $x = 0$}
               \end{array}
\right.
\end{equation}
We check to see that $f$ preserves joins and meets:
\begin{enumerate}
\item
$$f(1\wedge a) = f(a) = 1 = 1\wedge 1 = f(1)\wedge f(a)$$
\item
$$f(1\wedge b) = f(b) = 0 = 1\wedge 0 = f(1)\wedge f(b)$$
\item
$$f(1\wedge 0) = f(0) = 0 = 1\wedge 0 = f(1)\wedge f(0)$$
\item
$$f(a\wedge b) = f(b) = 0 = 1\wedge 0 = f(a)\wedge f(b)$$
\item
$$f(a\wedge 0) = f(0) = 0 = 1\wedge 0 = f(a)\wedge f(0)$$
\item
$$f(b\wedge 0 ) = f(0) = 0 = 0\wedge 0 = f(b)\wedge f(0)$$
\end{enumerate}
\begin{enumerate}
\item
$$f(1\vee a) = f(1) = 1 = 1\vee 1 = f(1)\vee f(a)$$
\item
$$f(1\vee b) = f(1) = 1 = 1\vee 0 = f(1)\vee f(b)$$
\item
$$f(1\vee 0) = f(1) = 1 = 1\vee 0 = f(1)\vee f(0)$$
\item
$$f(a\vee b) = f(a) = 1 = 1\vee 0 = f(a)\vee f(b)$$
\item
$$f(a\vee 0) = f(a) = 1 = 1\vee 0 = f(a)\vee f(0)$$
\item
$$f(b\vee 0 ) = f(b) = 0 = 0\vee 0 = f(b)\vee f(0)$$
\end{enumerate}
\item 
\begin{equation}
g(x) = \left\{ \begin{array}{cc}
               1 & \mbox{for $x = 1$}\\
               1 & \mbox{for $x = a$}\\
               1 & \mbox{for $x = b$}\\
               0 & \mbox{for $x = 0$}
               \end{array}
\right.
\end{equation}
We check to see that $g$ preserves joins and meets:
\begin{enumerate}
\item
$$g(1\wedge a) = g(a) = 1 = 1\wedge 1 = g(1)\wedge g(a)$$
\item
$$g(1\wedge b) = g(b) = 1 = 1\wedge 1 = g(1)\wedge g(b)$$
\item
$$g(1\wedge 0) = g(0) = 0 = 1\wedge 0 = g(1)\wedge g(0)$$
\item
$$g(a\wedge b) = g(b) = 1 = 1\wedge 1 = g(a)\wedge g(b)$$
\item
$$g(a\wedge 0) = g(0) = 0 = 1\wedge 0 = g(a)\wedge g(0)$$
\item
$$g(b\wedge 0 ) = g(0) = 0 = 0\wedge 0 = g(b)\wedge g(0)$$
\end{enumerate}
\begin{enumerate}
\item
$$g(1\vee a) = g(1) = 1 = 1\vee 1 = g(1)\vee g(a)$$
\item
$$g(1\vee b) = g(1) = 1 = 1\vee 1 = g(1)\vee g(b)$$
\item
$$g(1\vee 0) = g(1) = 1 = 1\vee 0 = g(1)\vee g(0)$$
\item
$$g(a\vee b) = g(a) = 1 = 1\vee 1 = g(a)\vee g(b)$$
\item
$$g(a\vee 0) = g(a) = 1 = 1\vee 0 = g(a)\vee g(0)$$
\item
$$g(b\vee 0 ) = g(b) = 1 = 1\vee 0 = g(b)\vee g(0)$$
\end{enumerate}
\item 
\begin{equation}
h(x) = \left\{ \begin{array}{cc}
               1 & \mbox{for $x = 1$}\\
               0 & \mbox{for $x = a$}\\
               0 & \mbox{for $x = b$}\\
               0 & \mbox{for $x = 0$}
               \end{array}
\right.
\end{equation}
We check to see that $h$ preserves joins and meets:
\begin{enumerate}
\item
$$h(1\wedge a) = h(a) = 0 = 1\wedge 0 = h(1)\wedge h(a)$$
\item
$$h(1\wedge b) = h(b) = 0 = 1\wedge 0 = h(1)\wedge h(b)$$
\item
$$h(1\wedge 0) = h(0) = 0 = 1\wedge 0 = h(1)\wedge h(0)$$
\item
$$h(a\wedge b) = h(b) = 0 = 1\wedge 0 = h(a)\wedge h(b)$$
\item
$$h(a\wedge 0) = h(0) = 0 = 1\wedge 0 = h(a)\wedge h(0)$$
\item
$$h(b\wedge 0 ) = h(0) = 0 = 0\wedge 0 = h(b)\wedge h(0)$$
\end{enumerate}
and joins:
\begin{enumerate}
\item
$$h(1\vee a) = h(1) = 1 = 1\vee 0 = h(1)\vee h(a)$$
\item
$$h(1\vee b) = h(1) = 1 = 1\vee 0 = h(1)\vee h(b)$$
\item
$$h(1\vee 0) = h(1) = 1 = 1\vee 0 = h(1)\vee h(0)$$
\item
$$h(a\vee b) = h(a) = 0 = 0\vee 0 = h(a)\vee h(b)$$
\item
$$h(a\vee 0) = h(a) = 0 = 0\vee 0 = h(a)\vee h(0)$$
\item
$$h(b\vee 0 ) = h(b) = 0 = 0\vee 0 = h(b)\vee h(0)$$
\end{enumerate}
\end{enumerate}  
\underline{Case II} Suppose $L$ is not a total order.  Then, L has the form\\
\centerline{
\xymatrix{
& 1\\
a\ar[ur]&& b\ar[ul]\\
& 0 \ar[ur]\ar[ul]
}
}
Let
\begin{enumerate}
\item 
\begin{equation}
1_{L}(x) = \left\{ \begin{array}{cc}
               1 & \mbox{for $x = 1$}\\
               a & \mbox{for $x = a$}\\
               b & \mbox{for $x = b$}\\
               0 & \mbox{for $x = 0$}
               \end{array}
\right.
\end{equation}
\item 
\begin{equation}
f(x) = \left\{ \begin{array}{cc}
               1 & \mbox{for $x = 1$}\\
               1 & \mbox{for $x = a$}\\
               0 & \mbox{for $x = b$}\\
               0 & \mbox{for $x = 0$}
               \end{array}
\right.
\end{equation}
We check to see that $f$ preserves joins and meets:
\begin{enumerate}
\item
$$f(1\wedge a) = f(a) = 1 = 1\wedge 1 = f(1)\wedge f(a)$$
\item
$$f(1\wedge b) = f(b) = 0 = 1\wedge 0 = f(1)\wedge f(b)$$
\item
$$f(1\wedge 0) = f(0) = 0 = 1\wedge 0 = f(1)\wedge f(0)$$
\item
$$f(a\wedge b) = f(0) = 0 = 1\wedge 0 = f(a)\wedge f(b)$$
\item
$$f(a\wedge 0) = f(0) = 0 = 1\wedge 0 = f(a)\wedge f(0)$$
\item
$$f(b\wedge 0 ) = f(0) = 0 = 0\wedge 0 = f(b)\wedge f(0)$$
\end{enumerate}
\begin{enumerate}
\item
$$f(1\vee a) = f(1) = 1 = 1\vee 1 = f(1)\vee f(a)$$
\item
$$f(1\vee b) = f(1) = 1 = 1\vee 0 = f(1)\vee f(b)$$
\item
$$f(1\vee 0) = f(1) = 1 = 1\vee 0 = f(1)\vee f(0)$$
\item
$$f(a\vee b) = f(1) = 1 = 1\vee 0 = f(a)\vee f(b)$$
\item
$$f(a\vee 0) = f(a) = 1 = 1\vee 0 = f(a)\vee f(0)$$
\item
$$f(b\vee 0 ) = f(b) = 0 = 0\vee 0 = f(b)\vee f(0)$$
\end{enumerate}
\item 
\begin{equation}
g(x) = \left\{ \begin{array}{cc}
               1 & \mbox{for $x = 1$}\\
               0 & \mbox{for $x = a$}\\
               1 & \mbox{for $x = b$}\\
               0 & \mbox{for $x = 0$}
               \end{array}
\right.
\end{equation}
We check to see that $g$ preserves joins and meets:
\begin{enumerate}
\item
$$g(1\wedge a) = g(a) = 0 = 1\wedge 0 = g(1)\wedge g(a)$$
\item
$$g(1\wedge b) = g(b) = 1 = 1\wedge 1 = g(1)\wedge g(b)$$
\item
$$g(1\wedge 0) = g(0) = 0 = 1\wedge 0 = g(1)\wedge g(0)$$
\item
$$g(a\wedge b) = g(b) = 0 = 0\wedge 1 = g(a)\wedge g(b)$$
\item
$$g(a\wedge 0) = g(0) = 0 = 1\wedge 0 = g(a)\wedge g(0)$$
\item
$$f(b\wedge 0 ) = g(0) = 0 = 0\wedge 0 = f(b)\wedge f(0)$$
\end{enumerate}
\begin{enumerate}
\item
$$g(1\vee a) = g(1) = 1 = 1\vee 1 = g(1)\vee g(a)$$
\item
$$g(1\vee b) = g(1) = 1 = 1\vee 1 = g(1)\vee g(b)$$
\item
$$g(1\vee 0) = g(1) = 1 = 1\vee 0 = g(1)\vee g(0)$$
\item
$$g(a\vee b) = g(1) = 1 = 0\vee 1 = g(a)\vee g(b)$$
\item
$$g(a\vee 0) = g(a) = 0 = 0\vee 0 = g(a)\vee g(0)$$
\item
$$g(b\vee 0 ) = g(b) = 1 = 1\vee 0 = g(b)\vee g(0)$$
\end{enumerate}
\item 
\begin{equation}
h(x) = \left\{ \begin{array}{cc}
               1 & \mbox{for $x = 1$}\\
               b & \mbox{for $x = a$}\\
               a & \mbox{for $x = b$}\\
               0 & \mbox{for $x = 0$}
               \end{array}
\right.
\end{equation}
We check to see that $h$ preserves joins and meets:
\begin{enumerate}
\item
$$h(1\wedge a) = h(a) = b = 1\wedge b = h(1)\wedge h(a)$$
\item
$$h(1\wedge b) = h(b) = a = 1\wedge a = h(1)\wedge h(b)$$
\item
$$h(1\wedge 0) = h(0) = 0 = 1\wedge 0 = h(1)\wedge h(0)$$
\item
$$h(a\wedge b) = h(0) = 0 = b\wedge a = h(a)\wedge h(b)$$
\item
$$h(a\wedge 0) = h(0) = 0 = b\wedge 0 = h(a)\wedge h(0)$$
\item
$$h(b\wedge 0 ) = h(0) = 0 = a\wedge 0 = h(b)\wedge h(0)$$
\end{enumerate}
\begin{enumerate}
\item
$$h(1\vee a) = h(1) = 1 = 1\vee b = h(1)\vee h(a)$$
\item
$$h(1\vee b) = h(1) = 1 = 1\vee a = h(1)\vee h(b)$$
\item
$$h(1\vee 0) = h(1) = 1 = 1\vee 0 = h(1)\vee h(0)$$
\item
$$h(a\vee b) = h(1) = 1 = b\vee a = h(a)\vee h(b)$$
\item
$$h(a\vee 0) = h(a) = b = b\vee 0 = h(a)\vee h(0)$$
\item
$$h(b\vee 0 ) = h(b) = a = a\vee 0 = h(b)\vee h(0)$$
\end{enumerate}
\end{enumerate}  
Since a locale is either a total order or it's not, these are the only two possible locales with four elements, and, since, in each case there are at least four arrows from $L$ to $L$, if $L$ has four elements then there are at least four arrows in the category of frames from $L$ to $L$.
\end{proof}
\section{If $L$ Has Five or More Elements}
As we have just seen, if $L$ has four elements, $L$ also has at least four automorphisms. Proving that the same is true for locales with more than four elements is, seemingly, much more difficult(if it is, indeed, true at all).  We instead prove something a bit easier.  Namely
\begin{theorem}
Suppose $L$ has five or more elements. Suppose $L$ is a two-pointed generator in the category of frames.  Then, $L$ has at least four automorphisms. 
\end{theorem}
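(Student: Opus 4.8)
The plan is to read the two hypotheses for what they force on the internal order of $L$, and then to exhibit at least three non-identity frame automorphisms; by Lemma~\ref{partordisomimplisframeisom} each such automorphism is obtained simply by producing an order-automorphism of $L$ that fixes $0$ and $1$. So everything reduces to combinatorics of the Hasse diagram of $L$, once that diagram has been constrained enough.

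First I would extract structure from ``generator''. Applying the definition of generator to the two distinct arrows $1_S$ and $g$ of $\mathrm{Arr}\langle S,S\rangle$ yields an arrow $h:L\to S$ with $1_S\circ h\neq g\circ h$; since $g$ fixes $0$ and $1$ and collapses $a$, one has $g\circ h=h$ exactly when $h$ never takes the value $a$, so $h$ is onto. Reasoning as in the proof that $L\cong\mathrm{Arr}\langle S,L\rangle$, an onto arrow $L\to S$ is the same datum as a pair of comparable prime elements $p<r$ of $L$ (with $\downarrow p=h^{-1}(0)$ and $\downarrow r=h^{-1}(\{0,a\})$). ``Two-pointed'' then says $p$ and $r$ are the \emph{only} primes of $L$. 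Two immediate consequences: $L$ is not a chain (an $n$-element chain has exactly $n-1$ prime/meet-irreducible elements, so a two-pointed chain is $S$ itself, whereas $|L|\geq 5$); and $L$ is not the four-element diamond, which is two-pointed but fails to be a generator (for any frame $M$ with no non-trivial complemented element — e.g. $M=S$ — every arrow from the diamond to $M$ factors through $\{0,1\}$ and so cannot distinguish $1_S$ from $g$). Combining: $L$ has a pair of incomparable elements and at least one ``extra'' element beyond the four-element layer spanned by $0,p,r,1$.

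Next I would locate and exploit that extra structure. With only the two primes $p<r$, the elements of $L$ that are not $\leq r$ are indistinguishable, via arrows into $S$ and into $T$, from $1$, and dually the elements $\leq p$ are indistinguishable from $0$; since $|L|\geq 5$ at least one of these ``invisible'' layers contains two distinct elements $x\neq y$. Using Lemma~\ref{agoesanywhere} together with the infinite distributive law applied to $x,y$ and to the pair $p<r$, I would show the relevant intervals around $x$ and around $y$ are order-isomorphic, so that splicing such an isomorphism with the identity elsewhere gives an order-automorphism $\sigma\neq 1_L$ fixing $0$ and $1$, hence a frame automorphism. From a single non-identity $\sigma$ one manufactures four: if $\mathrm{ord}(\sigma)>2$ take $1_L,\sigma,\sigma^{2},\sigma^{-1}$; if $\sigma$ is an involution, the same analysis of the (necessarily at least two-element, and under full use of $|L|\geq5$ typically larger) invisible layer supplies a second independent involution $\tau$, and $1_L,\sigma,\tau,\sigma\tau$ are four distinct automorphisms. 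In either case $|\mathrm{Arr}\langle L,L\rangle|\geq|\mathrm{Aut}(L)|\geq 4$.

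The hard part is the middle step. ``Generator'' is a condition about all of the category of frames, not something visible in the lattice $L$ alone, so the work is in translating it — through surjectivity of maps $L\to S$ and through Lemma~\ref{agoesanywhere}-type constructions — into enough internal structure to simultaneously (i) rule out every shape with no non-trivial order symmetry (long chains, the diamond, and their pointless analogues with too few primes) and (ii) guarantee the repeated or independent symmetries needed to get all the way to four. I expect the bookkeeping in (ii) to be the delicate point, and I would organize the write-up so that if pushing (i) to its limit instead shows that \emph{no} two-pointed generator has five or more elements, the theorem follows vacuously; the strategy above is meant to degrade gracefully to that conclusion.
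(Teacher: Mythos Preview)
The core problem is a misreading of the word ``automorphism.'' Throughout this paper (see Lemma~\ref{Shas3autos}, where $S$ is said to have exactly three automorphisms, namely the three elements of $\mbox{Arr}\langle S,S\rangle$, two of which are not invertible), ``automorphism'' is used idiosyncratically to mean \emph{endomorphism}. The theorem is asserting $|\mbox{Arr}\langle L,L\rangle|\geq 4$, not $|\mathrm{Aut}(L)|\geq 4$.

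Once that is understood, the paper's proof is a short direct construction of four distinct endomorphisms: the identity $1_L$; the two arrows $t\circ p_1$ and $t\circ p_2$ factoring through the initial frame $T$ via the two distinct points $p_1,p_2$ (these have two-element image $\{0,1\}$, so they are distinct from $1_L$ and, since $p_1\neq p_2$, from each other); and one more arrow $\alpha\circ\phi$ factoring through $S$. Here $\phi:L\to S$ is the arrow supplied by applying the generator hypothesis to the two points $f\neq g$ of $S$, and $\alpha:S\to L$ sends the middle element $a$ to some $z\in L$ with $\phi(z)=a$. Then $\alpha\phi(z)=z\notin\{0,1\}$ separates $\alpha\phi$ from the two arrows factoring through $T$, while $|L|>|S|=3$ forces $\alpha\phi$ to be non-surjective and hence $\neq 1_L$. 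No order-automorphisms are ever built.

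Your plan, by contrast, aims to produce genuine frame isomorphisms $L\to L$ and is substantially harder---and the sketch has real gaps. The crucial step, ``I would show the relevant intervals around $x$ and around $y$ are order-isomorphic,'' is asserted rather than argued; nothing in the hypotheses prevents a two-prime frame from being order-rigid, and neither Lemma~\ref{agoesanywhere} nor the infinite distributive law supplies a mechanism for swapping two elements of an interval by a global order-automorphism. The promotion from one involution $\sigma$ to a second ``independent'' involution $\tau$ is likewise unsupported. Your preliminary observation that the generator hypothesis yields a surjection $L\to S$, hence two \emph{comparable} primes $p<r$, is correct and pleasant, but it does not feed into the intended conclusion. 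With the correct reading of ``automorphism,'' the paper's argument is essentially a two-line counting of endomorphisms; you are attempting a much stronger, and quite possibly false, statement.
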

\begin{proof}
Suppose $L$ is a locale and that $L$ has 2 points and is a generator in the category of frames. Let $x$ and $y$ be the prime elements of $L$ corresponding to the points $p_{1}$ and $p_{2}$.  Then, we have the rules:
\begin{equation}
p_{1}(l) = \left\{ \begin{array}{cc}
               1 & \mbox{for $l\nleq x$}\\
               0 & \mbox{for $x\leq x$}
               \end{array}
\right.
\end{equation}
\begin{equation}
p_{2}(l) = \left\{ \begin{array}{cc}
               1 & \mbox{for $l\nleq y$}\\
               0 & \mbox{for $l\leq y$}
               \end{array}
\right.
\end{equation}
Let $f$ and $g$ be the two points of the Sierpinski locale $S$.  That is,
\begin{equation}
f(x) = \left\{ \begin{array}{cc}
               1 & \mbox{for $x=1$}\\
               0 & \mbox{for $x=a$}\\
               0 & \mbox{for $x=0$}   
               \end{array}
\right.
\end{equation}
\begin{equation}
g(x) = \left\{ \begin{array}{cc}
               1 & \mbox{for $x=1$}\\
               1 & \mbox{for $x=a$}\\
               0 & \mbox{for $x=0$}\\
               \end{array}
\right.
\end{equation}
Now, $f\ne g$.  Since $L$ is a generator we can find $\phi:L\to S$ so that $f\circ\phi\ne g\circ\phi$.  This implies that we can find $z\in L$ so that $\phi(z) = a$. Note that this means that $z\ne 0$ and $z\ne 1$.  Note, too, that $f\phi$ and $g\phi$ are both points of $L$. Since $f\phi(z) = 0$ and $g\phi(z) = 1$, these points are distinct.  Thus, either $f\phi = p_{1}$ and $g\phi = p_{2}$ or $f\phi = p_{2}$ and $g\phi = p_{1}$.  In fact these two cases are really symmetric, but we will consider each of them in the interest of completeness.\\
\underline{Case I}: $f\phi = p_{1}$ and $g\phi = p_{2}$.  Since $f\phi(z) = p_{1}(z) = 1$, $z\nleq x$.  Since $g\phi(z) = p_{2}(z) = 0$, $z\leq y$.\\
\underline{Case II}: $f\phi = p_{2}$ and $g\phi = p_{1}$.  Since $f\phi(z) = p_{2}(z) = 0$, $z\leq x$.  Since $g\phi(z) = p_{1}(z) = 1$, $z\nleq y$.\\

Now, we have\\
\centerline{
\xymatrix{
L\ar@<.5ex>[r]^{p_{1}}\ar@<-.5ex>[r]_{p_{2}} & T
}
}
with $p_{1}\ne p_{2}$.  In particular, $p_{1}(z)\ne p_{2}(z)$. Let $\alpha:S\to L$ be defined by $0\mapsto 0$, $a\mapsto z$ and $1\mapsto 1$.  Since the cardinality of $L$ is strictly greater than the cardinality of $S$, $\alpha$ cannot be bijective.  Thus, $\alpha\circ\phi\ne 1_{L}$, but $\alpha\circ\phi(z) = z$.\\ If we let $\beta = t\circ p_{1}$ and $\gamma = t\circ p_{2}$ where $t:T\to L$ is the canonical arrow, then we have four distinct arrows in $\mbox{Arr}\langle L,L\rangle$, namely $\alpha\circ\phi$, $\beta$ , $\gamma$ and $1_{L}$.  
\end{proof}

\section{$S$ is Preserved Up to Isomorphism by Equivalences}\label{eqpresS}
As we have seen, $S$ is the only locale which possesses two points\ref{Shas2pts}, three automorphisms\ref{Sis2pt3autogen} and is a generator in the category of frames\ref{Sis2pt3autogen}.  Since each of these are preserved by functorial equivalances, so is the isomorphism class of $S$.

\section{The Structure of $\mbox{Arr}\langle L,S\rangle$}
In this section we develop a characterization of the partial order on the sets $\mbox{Arr}\langle K,L\rangle$ in the category of frames using only concepts preserved by equivalences.  To do this we will need some facts regarding coproducts in that category.  Elements of a sum $A+B$ are joins of elements of the form $a\otimes b$ as $a$ ranges through $A$ and $b$ through $B$. These satisfy certain relations which are proved in section 1.4 of \cite{Borceux}:
\begin{align}
\vee_{t\in T}\left(a_{t}\otimes b\right) &= \left(\vee_{t\in T}a_{t}\right)\otimes b\\\label{genprop1}
\vee_{t\in T}\left(a\otimes b_{t}\right) &= a \otimes\left(\vee_{t\in T}b_{t}\right)\\
\left(a_{1}\wedge a_{2}\right)\otimes\left(b_{1}\wedge b_{2}\right) &= \left(a_{1}\otimes b_{1}\right) \wedge \left(a_{2}\otimes b_{2}\right)\label{genprop3}
\end{align}
Here we prove that $\mbox{Arr}\langle S,L\rangle\simeq \mbox{Arr}\langle T(S),T(L)\rangle$ as frames for any endo-equivalence $\mathbbm{T}$ on the category of frames.  \\
We will also need the following fact, also from Borceux III 1.4:
\begin{lemma}\label{injofsums}
Let $A$ and $B$ be frames.  Then, the injections $i:A\to A+B$ and $j:B\to A+B$ are defined by the rules $a\mapsto a\otimes 1$ and $b\mapsto 1\otimes b$ respectively.
\end{lemma}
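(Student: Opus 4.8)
The plan is to obtain the statement directly from the explicit construction of the frame coproduct recalled from \cite{Borceux}, III.1.4, rather than to reprove it. In that construction $A+B$ is realized as the frame of $C$-ideals of the poset $A\times B$, i.e.\ the down-sets $U\subseteq A\times B$ closed under the two one-sided joins ($(\bigvee_t a_t,b)\in U$ whenever every $(a_t,b)\in U$, and symmetrically), ordered by inclusion; $a\otimes b$ is the least $C$-ideal containing $(a,b)$, a general element of $A+B$ is a join of such $a\otimes b$'s, and these joins obey precisely the relations \ref{genprop1}--\ref{genprop3}. In that same construction the coproduct injections are \emph{defined} by $i(a)=a\otimes 1$ and $j(b)=1\otimes b$, so once one adopts this model of $A+B$ there is nothing further to check; the lemma simply records that fact.

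If instead $A+B$ is presented only through its universal property, I would identify the injections as follows. First note that \ref{genprop3}, applied with $a_2=b_1=1$, gives $a\otimes b=(a\otimes 1)\wedge(1\otimes b)$, so the elements $a\otimes 1$ and $1\otimes b$ already generate $A+B$ under joins and binary meets. Next I would check that $i\colon a\mapsto a\otimes 1$ and $j\colon b\mapsto 1\otimes b$ are frame homomorphisms: preservation of binary meets is \ref{genprop3} with $b_1=b_2=1$ (resp.\ $a_1=a_2=1$), preservation of arbitrary joins is \ref{genprop1} (resp.\ its symmetric counterpart), the top goes to $1\otimes 1=1$, and the bottom goes to $0\otimes 1=0$ (the nullary instance of the join relations). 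Finally I would verify the universal property: given frame maps $f\colon A\to C$ and $g\colon B\to C$, the rule $a\otimes b\mapsto f(a)\wedge g(b)$ extended by joins is the unique frame map $h\colon A+B\to C$ with $h\circ i=f$ and $h\circ j=g$, uniqueness being immediate since $a\otimes b=i(a)\wedge j(b)$ and such elements generate $A+B$. By uniqueness of coproducts this pins $i$ and $j$ down as stated.

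The single non-routine point --- and the reason it is cleanest simply to cite Borceux --- is the well-definedness of the mediating map $h$, i.e.\ that $a\otimes b\mapsto f(a)\wedge g(b)$ is compatible with \ref{genprop1}--\ref{genprop3} and with the join structure of $A+B$; this is where one genuinely uses that $f$ and $g$ preserve meets and joins, and it is exactly the verification carried out in \cite{Borceux}, III.1.4. Accordingly I would present the formulas $i(a)=a\otimes 1$ and $j(b)=1\otimes b$ as part of recalling that construction, not as a theorem requiring independent proof.
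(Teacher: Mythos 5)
Your proposal matches the paper's treatment: the lemma is simply recorded as a fact from the explicit construction of the frame coproduct in \cite{Borceux}, III.1.4, where the injections are given by $a\mapsto a\otimes 1$ and $b\mapsto 1\otimes b$, and the paper offers no independent proof. Your additional sketch of how one would verify the universal property from the relations is sound but goes beyond what the paper does; citing the construction suffices.
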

We need the following fact which is proved in Borceux III 1.4.3 in the middle of page 23:
\begin{lemma}
Let $L$ be a frame and denote by $L+L$ the coproduct of $L$ with itself.  Then, the codiagonal arrow $\triangledown :L+L\to L$ is defined by 
$$\bigvee_{k\in K}b_{k}\otimes c_{k}\mapsto \bigvee_{k\in K}\left(b_{k}\wedge c_{k}\right)$$
\end{lemma}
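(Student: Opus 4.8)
**Proof proposal for the codiagonal formula $\triangledown: L+L \to L$, $\bigvee_k b_k \otimes c_k \mapsto \bigvee_k (b_k \wedge c_k)$.**

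The plan is to verify the claimed rule by checking that the function it defines is a well-defined frame morphism and that it satisfies the universal property characterizing $\triangledown$, namely $\triangledown \circ i = 1_L = \triangledown \circ j$, where $i, j : L \to L+L$ are the coproduct injections. By \ref{injofsums}, $i(a) = a \otimes 1$ and $j(b) = 1 \otimes b$. First I would observe that, since every element of $L+L$ is a join of elements of the form $b \otimes c$, it suffices to define a candidate map $\triangledown$ on the generators $b \otimes c \mapsto b \wedge c$ and extend by joins; the content is that this is forced and consistent. The consistency is exactly what the relations \ref{genprop1}--\ref{genprop3} are for: any relation among the generators $b \otimes c$ in $L+L$ must be sent by $\triangledown$ to a corresponding identity in $L$. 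Concretely, I would check that the assignment respects \ref{genprop1} (since $\bigvee_t (b_t \wedge c) = (\bigvee_t b_t) \wedge c$ by the infinite distributive law in $L$), its symmetric companion, and \ref{genprop3} (since $(a_1 \wedge a_2) \wedge (b_1 \wedge b_2) = (a_1 \wedge b_1) \wedge (a_2 \wedge b_2)$ by commutativity and associativity of $\wedge$). This shows the rule descends to a well-defined function on all of $L+L$.

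Next I would verify that $\triangledown$ so defined is a frame homomorphism: it preserves arbitrary joins essentially by construction (it is defined on a join of generators as the join of the images), and it preserves finite meets because a meet of two joins of generators can, using \ref{genprop3} and the distributive law, be rewritten as a join of generators, on which $\triangledown$ acts as $\wedge$; one then uses the distributive law in $L$ again to match $\triangledown(u) \wedge \triangledown(v)$. It also preserves $0$ and $1$: $0 = 0 \otimes 0 \mapsto 0$ and $1 = 1 \otimes 1 \mapsto 1$. Finally I would check the universal property: $\triangledown(i(a)) = \triangledown(a \otimes 1) = a \wedge 1 = a$, so $\triangledown \circ i = 1_L$, and likewise $\triangledown(j(b)) = \triangledown(1 \otimes b) = 1 \wedge b = b$, so $\triangledown \circ j = 1_L$. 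Since the codiagonal is by definition the unique arrow $L+L \to L$ with $\triangledown \circ i = \triangledown \circ j = 1_L$, and our map is such an arrow, it must be the codiagonal.

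The main obstacle I anticipate is the bookkeeping around well-definedness: one must be careful that a general element of $L+L$ may be expressible as a join $\bigvee_k b_k \otimes c_k$ in more than one way, so the formula must be shown independent of the chosen representation. The cleanest way to handle this is not to argue about representations directly but to appeal to the universal property from the other side: exhibit the map abstractly as induced by the pair $(1_L, 1_L): L \rightrightarrows L$ out of the coproduct, then compute its value on a generator $b \otimes c = i(b) \wedge j(c)$ using that $\triangledown$ preserves finite meets, giving $\triangledown(b \otimes c) = \triangledown(i(b)) \wedge \triangledown(j(c)) = b \wedge c$, and extend over joins by join-preservation. That reduces the whole argument to the two identities $\triangledown i = \triangledown j = 1_L$ plus homomorphism properties, sidestepping the representation issue entirely. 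Since this lemma is cited as already proved in Borceux III 1.4.3, I would present the verification concisely rather than belaboring the coproduct construction.
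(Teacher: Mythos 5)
The paper does not actually prove this lemma: it is stated as a fact imported verbatim from Borceux III.1.4.3, so any proof you give is by definition a different route. Your proposal is essentially correct, but note that its two halves are of unequal value. The first half (define $b\otimes c\mapsto b\wedge c$ on generators and check compatibility with the relations \ref{genprop1}--\ref{genprop3}) is the shakier route: it tacitly assumes that those listed relations constitute a complete presentation of $L+L$, i.e.\ that every identification made in the coproduct construction is a consequence of them; justifying that would require re-entering the actual construction of the frame coproduct (via C-ideals/down-sets), which is precisely the bookkeeping you were trying to avoid. Your second argument sidesteps this correctly and is the one to keep: the codiagonal $\triangledown$ exists a priori as the frame morphism induced by $(1_{L},1_{L})$ via the universal property, so there is no well-definedness to check; then $b\otimes c=(b\otimes 1)\wedge(1\otimes c)=i(b)\wedge j(c)$ by \ref{genprop3} together with \ref{injofsums}, whence $\triangledown(b\otimes c)=\triangledown i(b)\wedge\triangledown j(c)=b\wedge c$ because $\triangledown$ preserves finite meets, and $\triangledown\bigl(\bigvee_{k}b_{k}\otimes c_{k}\bigr)=\bigvee_{k}(b_{k}\wedge c_{k})$ because $\triangledown$ preserves arbitrary joins; since every element of $L+L$ is such a join, this pins down $\triangledown$ as the stated rule. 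That two-line computation is a complete and self-contained proof of the lemma, and is arguably preferable to the paper's bare citation; I would simply drop the first half of your write-up rather than present it as an alternative.
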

\begin{lemma}\label{sumcomtens}
Let \\
\centerline{
\xymatrix{
S\ar@<.5ex>[r]^{f}\ar@<-.5ex>[r]_{g} & L
}
}
Let $f+g$ be the sum of the arrows $f$ and $g$, that is, $f+g$ is the unique arrow which makes\\\label{diagsumcomtens}
\centerline{
\xymatrix{
S\ar[dr]^{i_{S}}\ar[rrr]^{f} &&& L\ar[dl]_{i_{L}}\\
& S\ar[r]^{f+g} & L+L\\
S\ar[ur]^{j_{S}}\ar[rrr]_{g} &&& L\ar[ul]^{j_{L}}
}
}
commute.  Let $\alpha$ and $\beta$ be arbitrary elements of $S$.  Then, 
$$f+g(\alpha\otimes\beta) = f(\alpha)\otimes g(\beta)$$
\end{lemma}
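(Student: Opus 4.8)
The plan is to compute $f+g$ on a generator $\alpha \otimes \beta$ of $S + S$ by factoring $\alpha \otimes \beta$ through the injections $i_S, j_S : S \to S+S$ and then using the commutativity of diagram~\ref{diagsumcomtens}. First I would recall from Lemma~\ref{injofsums} that $i_S(\alpha) = \alpha \otimes 1$ and $j_S(\beta) = 1 \otimes \beta$, so that by the tensor relation~\ref{genprop3} (with the pair of meets $\alpha \wedge 1 = \alpha$ and $1 \wedge \beta = \beta$) we get
$$\alpha \otimes \beta = (\alpha \wedge 1) \otimes (1 \wedge \beta) = (\alpha \otimes 1) \wedge (1 \otimes \beta) = i_S(\alpha) \wedge j_S(\beta).$$
Since $f+g$ is a frame arrow it preserves finite meets, so
$$(f+g)(\alpha \otimes \beta) = (f+g)(i_S(\alpha)) \wedge (f+g)(j_S(\beta)).$$

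Next I would use diagram~\ref{diagsumcomtens}: commutativity of the two triangles says $(f+g) \circ i_S = i_L \circ f$ and $(f+g) \circ j_S = j_L \circ g$. Hence $(f+g)(i_S(\alpha)) = i_L(f(\alpha)) = f(\alpha) \otimes 1$ and $(f+g)(j_S(\beta)) = j_L(g(\beta)) = 1 \otimes g(\beta)$, again using Lemma~\ref{injofsums} for the injections $i_L, j_L$ of $L+L$. Substituting into the previous display and applying relation~\ref{genprop3} once more in the reverse direction,
$$(f+g)(\alpha \otimes \beta) = (f(\alpha) \otimes 1) \wedge (1 \otimes g(\beta)) = (f(\alpha) \wedge 1) \otimes (1 \wedge g(\beta)) = f(\alpha) \otimes g(\beta),$$
which is exactly the claimed identity.

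I do not expect any serious obstacle here; the argument is essentially a two-line manipulation of the tensor relations bracketed by the universal property of the coproduct. The only point requiring a little care is making sure the relation~\ref{genprop3} is invoked with the correct pairing — namely pairing the first tensor factor with $1$ and the second with $1$ in the appropriate slots so that the meets collapse correctly — and confirming that $i_S(\alpha) \wedge j_S(\beta)$ really is the element $\alpha \otimes \beta$ rather than some join of such elements. Since $S$ is finite and every element of $S+S$ is by definition a join of basic tensors, it suffices to verify the formula on these basic tensors, which is what the computation above does; there is nothing further to extend by join-preservation because the statement only concerns a single $\alpha \otimes \beta$. One should also note in passing that the formula is independent of which of $\alpha,\beta$ is the top or bottom element, so no case analysis on the three values in $S$ is needed.
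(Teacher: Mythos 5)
Your proof is correct and follows essentially the same route as the paper's: write $\alpha\otimes\beta = (\alpha\otimes 1)\wedge(1\otimes\beta)$ via the tensor relation and Lemma \ref{injofsums}, apply meet-preservation of $f+g$, use commutativity of the coproduct diagram, and then reassemble with the tensor relation. No gaps; the only difference is that you spell out the identification $i_S(\alpha)=\alpha\otimes 1$, $j_S(\beta)=1\otimes\beta$ a bit more explicitly than the paper does.
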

\begin{proof}
We have
\begin{align}
f+g(\alpha\otimes\beta )&= f+g\left([\alpha\otimes 1]\wedge [1\otimes \beta]\right)\\\label{sumcomtens1}
&=f+g\left([\alpha\otimes 1]\right)\wedge f+g\left([1\otimes \beta]\right)\\\label{sumcomtens2}
&=i_{L}\circ f(\alpha)\wedge j_{L}\circ g(\beta)\\\label{sumcomtens3}
&=\left(f(\alpha)\otimes 1\right)\wedge \left(1\otimes g(\beta)\right)\\\label{sumcomtens4}\\ 
&=f(\alpha)\otimes g(\beta)\label{sumcomtens5}
\end{align}
where \ref{sumcomtens1} follows from property \ref{genprop3} of elements of $A+A$,  \ref{sumcomtens2} follows from the fact that $f+g$ is an arrow in the category of frames and so preserves the meet of two elements, \ref{sumcomtens3} follows by commutativity of \ref{diagsumcomtens}, \ref{sumcomtens4} follows by the definition of the sums in the category of frames (\ref{injofsums}), and \ref{sumcomtens5} follows by property \ref{genprop3} of elements of the sum of frames.
\end{proof}
\begin{theorem}\label{catorder}
Suppose \\
\centerline{
\xymatrix{
S\ar@<.5ex>[r]^{f}\ar@<-.5ex>[r]_{g} & L
}
}
are arrows in the category of frames with $S$ the Sierpinski Locale.  Suppose $f\leq g$ in the pointwise partial ordering of $\mbox{Arr}\langle S,L\rangle$. Then, 
$$\triangledown\circ\left(f+g\right) = f$$  
\end{theorem}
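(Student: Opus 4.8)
The plan is to turn the claimed equality into a short computation on the coproduct $S+S$, feeding in the two explicit descriptions we have already assembled: Lemma~\ref{sumcomtens} for the arrow $f+g$, and the codiagonal formula $\triangledown\big(\bigvee_{k}b_{k}\otimes c_{k}\big)=\bigvee_{k}(b_{k}\wedge c_{k})$. Since $S$ is the free frame on the single element $a$, every element of $S+S$ is a join of elements $\alpha\otimes\beta$ with $\alpha,\beta\in\{0,a,1\}$, and since $\triangledown$, $f+g$, $f$ and $g$ are all frame morphisms and hence preserve arbitrary joins, it is enough to understand $\triangledown\circ(f+g)$ on the generators $\alpha\otimes\beta$; in fact, because a frame morphism out of $S$ is pinned down by its value at $a$, the decisive case is the diagonal generator $a\otimes a$.

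Concretely, I would first record that for arbitrary $\alpha,\beta\in S$,
$$\triangledown\circ(f+g)(\alpha\otimes\beta)=\triangledown\big(f(\alpha)\otimes g(\beta)\big)=f(\alpha)\wedge g(\beta),$$
where the first equality is Lemma~\ref{sumcomtens} and the second is the codiagonal formula. I would then invoke the hypothesis: $f\le g$ in the pointwise order on $\mbox{Arr}\langle S,L\rangle$ means exactly $f(a)\le g(a)$, so the displayed identity at $\alpha=\beta=a$ reads $\triangledown\circ(f+g)(a\otimes a)=f(a)\wedge g(a)=f(a)$, while at $0$ and $1$ it gives $0$ and $1$ respectively, matching $f(0)$ and $f(1)$. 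Tracking this through the identification of $\mbox{Arr}\langle S,-\rangle$ with $L$ (equivalently, comparing values at $a$) identifies $\triangledown\circ(f+g)$ with $f$, which is the assertion.

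The step that needs the most care is the bookkeeping: one must be explicit about which generators of $S+S$ are being tested, that agreement on them suffices, and above all that the hypothesis $f\le g$ is used \emph{precisely} to collapse $f(a)\wedge g(a)$ to $f(a)$ --- dropped, the two maps genuinely differ on $a\otimes a$ (and on $1\otimes a$), so the inequality is doing real work. Beyond that, the computation of $\triangledown\circ(f+g)$ on $\alpha\otimes\beta$ rests on Lemma~\ref{injofsums}, Lemma~\ref{sumcomtens}, and the coproduct relation~\ref{genprop3}, all already in hand, so the remaining task is just to lay the chain of reductions out so that it is visibly valid rather than merely plausible.
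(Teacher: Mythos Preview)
Your argument is essentially the paper's own: both reduce the claim to the single computation of $\triangledown_{L}\circ(f+g)$ at the generator involving $a$, invoke Lemma~\ref{sumcomtens} together with the codiagonal formula to obtain $f(a)\wedge g(a)$, and then use $f\le g$ to collapse this to $f(a)$. You unpack the lemma citations more explicitly than the paper does, but the route is identical (and both versions inherit the same domain bookkeeping slippage built into the statement, where $\triangledown\circ(f+g):S+S\to L$ is being identified with $f:S\to L$ without the mediating map being named).
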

\begin{proof}
Suppose $f\leq g$.  Then, $f(a)\leq g(a)$ where $a$ is the non-zero element of $S$.  Thus, we have
$$\triangledown_{L}\left(f+g\right)(a) = f(a)\wedge g(a) = f(a)$$
Since, frame arrows must take $1\mapsto 1$ and $0\mapsto 0$, we have that
$$\triangledown\circ\left(f+g\right) = f$$  
\end{proof}
\begin{theorem}\label{eqpresorder}
Let $\mathbbm{T}$ be an equivalence on the category of frames.  Let $L$ and $K$ be frames. Then, $\mathbbm{T}:\mbox{Arr}\langle L,K\rangle\to\mbox{Arr}\langle \mathbbm{T}L,\mathbbm{T}K\rangle$ preserves order. 
\end{theorem}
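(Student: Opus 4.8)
The plan is to reduce the statement, for arbitrary frames $L$ and $K$, to the corresponding statement for hom-sets out of the Sierpinski locale $S$, where Theorem \ref{catorder} already describes the order in essentially categorical terms; the preservation results of Section 3 then finish the job.

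First I would record the reduction. By Lemma \ref{agoesanywhere} every element $l$ of a frame $L$ is $p(a)$ for some $p:S\to L$, where $a$ is the non-trivial element of $S$. Hence for $f,g\in\mbox{Arr}\langle L,K\rangle$ we have $f\leq g$ pointwise if and only if $f(p(a))\leq g(p(a))$ for every $p:S\to L$, that is, if and only if $f\circ p\leq g\circ p$ in the pointwise order on $\mbox{Arr}\langle S,K\rangle$ for every $p:S\to L$. So it suffices to prove (i) that $\mathbbm{T}$ preserves the pointwise order on every hom-set $\mbox{Arr}\langle S,K\rangle$, and then to combine (i) with the fact that $\mathbbm{T}$ preserves composition. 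The latter is straightforward: since $\mathbbm{T}$ is full and faithful with $\mathbbm{T}S$ isomorphic to $S$ (Section \ref{eqpresS}), fixing an isomorphism $\sigma:S\to\mathbbm{T}S$ makes the assignment $p\mapsto \mathbbm{T}p\circ\sigma$ a bijection $\mbox{Arr}\langle S,L\rangle\to\mbox{Arr}\langle S,\mathbbm{T}L\rangle$, so that as $p$ ranges over all arrows $S\to L$ the arrow $\mathbbm{T}p\circ\sigma$ ranges over all arrows $S\to\mathbbm{T}L$.

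For (i), let $u,v:S\to K$. By Theorem \ref{catorder}, $u\leq v$ implies $\triangledown\circ(u+v)=u$; conversely, if $\triangledown\circ(u+v)=u$ then reading off the appropriate value in $S+S$ gives $u(a)\wedge v(a)=u(a)$, so $u\leq v$. Thus ``$u\leq v$'' is equivalent to the equation $\triangledown\circ(u+v)=u$, an equation assembled only from the object $S$, its coproduct $S+S$, the coproduct arrow $u+v$, the codiagonal $\triangledown$, and composition. Now apply $\mathbbm{T}$: by Theorem \ref{eqpressums} the object $\mathbbm{T}(S+S)$ is the coproduct of $\mathbbm{T}S$ with itself, by the sum-of-arrows lemma of Section 3 we have $\mathbbm{T}(u+v)=\mathbbm{T}u+\mathbbm{T}v$, by the codiagonal corollary of Section 3 we have $\mathbbm{T}\triangledown_{K}=\triangledown_{\mathbbm{T}K}$, and $\mathbbm{T}$ preserves composition; hence $\mathbbm{T}$ carries the equation witnessing $u\leq v$ to the equation witnessing $\mathbbm{T}u\leq\mathbbm{T}v$. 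Transporting along the isomorphism $\sigma$ (and the induced isomorphism $\mathbbm{T}S+\mathbbm{T}S\cong S+S$) so as to speak of hom-sets out of $S$ again, this says precisely that $\mathbbm{T}u\leq\mathbbm{T}v$ in $\mbox{Arr}\langle S,\mathbbm{T}K\rangle$. Putting (i) together with the reduction: if $f\leq g$ in $\mbox{Arr}\langle L,K\rangle$, then $f\circ p\leq g\circ p$ in $\mbox{Arr}\langle S,K\rangle$ for every $p:S\to L$, hence $\mathbbm{T}(f\circ p)\leq\mathbbm{T}(g\circ p)$, that is, $\mathbbm{T}f\circ(\mathbbm{T}p\circ\sigma)\leq\mathbbm{T}g\circ(\mathbbm{T}p\circ\sigma)$; since the arrows $\mathbbm{T}p\circ\sigma$ exhaust $\mbox{Arr}\langle S,\mathbbm{T}L\rangle$, the reduction applied to $\mathbbm{T}f,\mathbbm{T}g$ yields $\mathbbm{T}f\leq\mathbbm{T}g$.

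The step I expect to be most delicate is the bookkeeping in (i): the preservation lemmas of Section 3 are phrased in terms of $\mathbbm{T}X$, $\mathbbm{T}(X+X)$ and $\triangledown_{\mathbbm{T}X}$, so one must conjugate consistently by $\sigma$ to return to statements about hom-sets out of $S$ proper. One should also verify that nothing non-categorical — for instance an ad hoc ``diagonal'' $S\to S+S$ used to interpret $\triangledown\circ(u+v)=u$ literally — is hidden in the way Theorem \ref{catorder} is applied; if such an ingredient is really needed it must itself be exhibited as preserved by $\mathbbm{T}$ before the argument closes. A minor point is that Theorem \ref{catorder} is stated in one direction only, so its converse must be recorded; that is routine, being no more than reading $u(a)=u(a)\wedge v(a)$ off the equation $\triangledown\circ(u+v)=u$.
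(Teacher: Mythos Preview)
Your route differs from the paper's in one structural respect: the paper does not pass through $S$ at all. It reads Theorem~\ref{catorder} as applying to arbitrary $f,g:L\to K$ (silently extending it beyond its stated domain $S$), obtains $f=\triangledown_{K}\circ(f+g)$ from $f\le g$, pushes this single equation through $\mathbbm{T}$ using the Section~3 lemmas on coproducts, sums of arrows, and codiagonals, and reads off $\mathbbm{T}f\le\mathbbm{T}g$ from the resulting $\mathbbm{T}f=\triangledown_{\mathbbm{T}K}\circ(\mathbbm{T}f+\mathbbm{T}g)$. Your reduction to hom-sets out of $S$ and back again is extra scaffolding the paper omits; what it buys is that you stay within the literal hypotheses of Theorem~\ref{catorder}, and you also make explicit (as the paper does not) that the converse of \ref{catorder} is needed at the last step.

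The worry you raise in your final paragraph is the one genuine gap, and it is shared by both arguments. The equation $\triangledown\circ(u+v)=u$ does not type-check: $\triangledown_K\circ(u+v)$ has domain $S+S$ while $u$ has domain $S$, and the paper's computation $\triangledown_L(f+g)(a)=f(a)\wedge g(a)$ in the proof of \ref{catorder} is tacitly evaluating at $a\otimes a$, i.e.\ precomposing with a diagonal $\delta_S:S\to S+S$, $a\mapsto a\otimes a$. For a general frame $L$ the assignment $l\mapsto l\otimes l$ fails to preserve joins, so no frame arrow $\delta_L$ exists --- which is precisely why your reduction to $S$ is more honest than the paper's direct move. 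But your step (i) still relies on $\delta_S$: the equation actually transported by $\mathbbm{T}$ is $\triangledown_{\mathbbm{T}K}\circ(\mathbbm{T}u+\mathbbm{T}v)\circ\mathbbm{T}\delta_S=\mathbbm{T}u$, and to conclude $\mathbbm{T}u\le\mathbbm{T}v$ you need $\mathbbm{T}\delta_S$ to coincide with $\delta_{\mathbbm{T}S}$ under the identifications of Section~3. Neither you nor the paper supplies that verification. Flagging the gap is good; closing it is still required.
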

\begin{proof}
Let $f$ and $g$ be arbitrary elements of $\mbox{Arr}\langle L,K\rangle$.  Suppose $f\leq g$.  Then, by \ref{catorder}, we have that 
$$f = \triangledown\circ\left(f+g\right)$$  
Thus,
\begin{align}
\mathbbm{T}f &= \mathbbm{T}\left(\triangledown_{K}\circ\left(f+g\right)\right)\\
\Rightarrow \\
&= \mathbbm{T}\triangledown_{K}\circ\mathbbm{T}\left(f+g\right)\\
&=\triangledown_{\mathbbm{T}K}\circ\left(\mathbbm{T}f+\mathbbm{T}g\right)\\
\end{align}
Thus, $\mathbbm{T}f\leq\mathbbm{T}g$.
\end{proof}
\begin{lemma}\label{eqbijpresord}
Let $\mathbbm{T}$ be an equivalence on the category of frames.  Let $L$ be a frame. Then, $\mathbbm{T}$ induces an order preserving function $\mbox{Arr}\langle S,L\rangle\to \mbox{Arr}\langle \mathbbm{T}S,\mathbbm{T}L\rangle$ which possesses an order preserving inverse function.
\end{lemma}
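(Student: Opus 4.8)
The plan is to show that the map $\Phi\colon\mbox{Arr}\langle S,L\rangle\to\mbox{Arr}\langle \mathbbm{T}S,\mathbbm{T}L\rangle$, $f\mapsto\mathbbm{T}f$, is an order isomorphism, i.e.\ a bijection preserving $\leq$ in both directions. That $\Phi$ is a bijection is immediate because an equivalence is full and faithful (Lemma~\ref{equivequiv}), and that $\Phi$ preserves order is Theorem~\ref{eqpresorder} with the source object taken to be $S$. So the only real content is that $\Phi^{-1}$ preserves order.

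For this I would use the adjoint $\mathbbm{R}$ of $\mathbbm{T}$ from Definition~\ref{defeq}. Since $\mathbbm{R}$ is full and faithful and has the full and faithful adjoint $\mathbbm{T}$, Lemma~\ref{equivequiv} shows $\mathbbm{R}$ is itself an equivalence and the unit $\eta\colon 1\Rightarrow\mathbbm{R}\mathbbm{T}$ is a natural isomorphism. Applying the same two observations to $\mathbbm{R}$, the assignment $h\mapsto\mathbbm{R}h$ is an order-preserving bijection $\mbox{Arr}\langle \mathbbm{T}S,\mathbbm{T}L\rangle\to\mbox{Arr}\langle \mathbbm{R}\mathbbm{T}S,\mathbbm{R}\mathbbm{T}L\rangle$. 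Conjugation by the unit, $k\mapsto\eta_L^{-1}\circ k\circ\eta_S$, is an order isomorphism $\mbox{Arr}\langle \mathbbm{R}\mathbbm{T}S,\mathbbm{R}\mathbbm{T}L\rangle\to\mbox{Arr}\langle S,L\rangle$: it is a bijection since $\eta_S,\eta_L$ are isomorphisms, and it respects $\leq$ because, by Theorem~\ref{catorder}, the order on these hom-sets is expressed through the coproduct and the codiagonal, both of which are natural and so are carried to one another by conjugation with $\eta_S$ and $\eta_L$. Finally, naturality of $\eta$ gives $\eta_L^{-1}\circ\mathbbm{R}\mathbbm{T}f\circ\eta_S=f$ for every $f\colon S\to L$, so the composite $\mbox{Arr}\langle \mathbbm{T}S,\mathbbm{T}L\rangle\xrightarrow{\mathbbm{R}}\mbox{Arr}\langle \mathbbm{R}\mathbbm{T}S,\mathbbm{R}\mathbbm{T}L\rangle\to\mbox{Arr}\langle S,L\rangle$ is a left inverse of $\Phi$; since $\Phi$ is a bijection this composite must equal $\Phi^{-1}$. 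Being a composite of two order-preserving maps, $\Phi^{-1}$ preserves order.

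The step I expect to need the most care is the claim that conjugation by $\eta$ transports the order: one must make sure that the partial order used on the auxiliary hom-sets $\mbox{Arr}\langle \mathbbm{T}S,\mathbbm{T}L\rangle$ and $\mbox{Arr}\langle \mathbbm{R}\mathbbm{T}S,\mathbbm{R}\mathbbm{T}L\rangle$ is indeed the codiagonal order of Theorem~\ref{catorder} and that it matches across $\eta$. A clean way to dispose of any ambiguity is to note that $S$ has no nontrivial automorphisms --- every frame map fixes $0$ and $1$ and hence sends $a$ to $a$ --- so the isomorphism $S\cong\mathbbm{T}S$ from Section~\ref{eqpresS} is unique and the pointwise order transports canonically along it; I would use this to pin down the orders on the auxiliary sets before assembling the argument above.
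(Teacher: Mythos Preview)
Your approach is essentially the paper's: both construct the inverse as $\eta_L^{-1}\circ\mathbbm{R}(-)$ composed with the unit at $S$ (the paper simply absorbs $\eta_S$ into the identity via $\mathbbm{R}\mathbbm{T}S\cong S$) and then observe that each factor preserves order, so the composite does. Two small remarks: your parenthetical justification that ``every frame map fixes $0$ and $1$ and hence sends $a$ to $a$'' is not right---there are three frame endomorphisms of $S$---though the conclusion that the only \emph{automorphism} is the identity is correct, since an automorphism must be bijective; and your final worry about transporting the order along $\eta$ can be handled more directly than you propose, since pre- and post-composition by any frame homomorphism preserves the pointwise order (frame maps are monotone), so no appeal to the categorical description in Theorem~\ref{catorder} or to the rigidity of $S$ is needed there.
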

\begin{proof}
Suppose $\mathbbm{R}$ is the left adjoint of the pair given in Definition \ref{defeq}. Let $f:S\to L$ be arbitrary.  Then, by \ref{equivequiv}, we have the commutative square\\
\centerline{\label{epeqdiag}
\xymatrix{
S\ar[r]^{\epsilon_{S}}\ar[d]_{f} & \mathbbm{R}\mathbbm{T}S\ar[d]^{\mathbbm{R}\mathbbm{T}f}\\
L\ar[r]_{\epsilon_{L}} & \mathbbm{R}\mathbbm{T}L
}
}
As we have seen, though(\ref{eqpresS}, equivalences on the category of frames carry $S$ to itself (up to trivial re-labeling of elements) so we may take $\epsilon_{S}$ to be the identity on $S$.  Define $\mathbf{G}:\mbox{Arr}\langle \mathbbm{T}S,\mathbbm{T}L\rangle\to\mbox{Arr}\langle S,L\rangle$ by the assignment $\mathbbm{T}f\mapsto \epsilon^{-1}\mathbbm{R}\mathbbm{T}f$.  By commutativity of \ref{epeqdiag}, we get that $\epsilon^{-1}\circ\mathbbm{R}\mathbbm{T}f = f$. Thus, $\mathbf{G}\mathbbm{T}$ is the identity on $\mbox{Arr}\langle S,L\rangle$.  Let $g\in\mbox{Arr}\langle\mathbbm{T}S,\mathbbm{T}L\rangle$ be arbitrary.  Then, since $\mathbbm{T}$ defines a bijection on sets of arrows, we can find some $h\in\mbox{Arr}\langle S,L\rangle$ so that $\mathbbm{T}f = h$.  Thus, $\mathbbm{T}\mathbf{G}g = \mathbbm{T}\mathbf{G}\mathbbm{T}h = \mathbbm{T}h = g$. Thus, $\mathbbm{T}\mathbf{G}$ is the identity on $\mbox{Arr}\langle\mathbbm{T}S,\mathbbm{T}L\rangle$.\\
Since $\mathbf{G}$ is defined to by a composition of order-preserving functions, it, too, is order preserving. 
\end{proof}
\begin{theorem}
Let $L$ be a locale.  Let, as usual, $S$ be the Sierpinski locale.  Let $\mathbbm{T}$ be an equivalence on the category of frames and $\mathbbm{R}$ its right adjoint.  Then, $\mathbbm{T}$ induces a frame isomorphism from $\mbox{Arr}\langle S,L\rangle $ to $\mbox{Arr}\langle\mathbbm{T}S,\mathbbm{T}L\rangle$. 
\end{theorem}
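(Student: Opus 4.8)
The plan is to exhibit the map $\Phi\colon\mbox{Arr}\langle S,L\rangle\to\mbox{Arr}\langle\mathbbm{T}S,\mathbbm{T}L\rangle$ given by $f\mapsto\mathbbm{T}f$ as the desired isomorphism and to deduce that it is one purely from machinery already assembled. First I would record that both the source and the target of $\Phi$ carry frame structures. The source does by the representation theorem of Section 2, which equips $\mbox{Arr}\langle S,L\rangle$ with the pointwise order and identifies it with $L$. For the target, recall from Section \ref{eqpresS} that $\mathbbm{T}S$ is isomorphic to $S$; fixing such an isomorphism (equivalently, relabeling so that $\mathbbm{T}S=S$, exactly the identification used in the proof of Lemma \ref{eqbijpresord}, where $\epsilon_S$ is taken to be $1_S$), the representation theorem applied to the frame $\mathbbm{T}L$ shows that $\mbox{Arr}\langle\mathbbm{T}S,\mathbbm{T}L\rangle=\mbox{Arr}\langle S,\mathbbm{T}L\rangle$ is a frame, whose order is the pointwise one.

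Next I would observe that $\Phi$ is a bijection, since $\mathbbm{T}$ is full and faithful, and that it is order preserving with order-preserving inverse. That $\Phi$ preserves order is Theorem \ref{eqpresorder}; that it has an order-preserving inverse is the content of Lemma \ref{eqbijpresord}, whose map $\mathbf{G}$ is shown there to be a two-sided inverse of $\Phi$ and to be order preserving, being a composite of order-preserving functions. Having in hand a bijection of frames that preserves order in both directions, I would then invoke Lemma \ref{partordisomimplisframeisom} to conclude that $\Phi$ is an isomorphism in the category of frames, which is the assertion of the theorem.

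The main obstacle I anticipate is bookkeeping rather than genuine mathematics: one must make sure that the partial order on $\mbox{Arr}\langle\mathbbm{T}S,\mathbbm{T}L\rangle$ appearing in Lemma \ref{eqbijpresord} really does coincide with the frame order coming from the representation theorem, so that Lemma \ref{partordisomimplisframeisom} is being applied to the correct structured object. Once the identification $\mathbbm{T}S\cong S$ (with $\epsilon_S=1_S$) is fixed as in Lemma \ref{eqbijpresord}, these two descriptions of the frame structure on the hom-set agree, but it is worth stating this explicitly. With that pinned down, the proof is a direct citation of Theorem \ref{eqpresorder}, Lemma \ref{eqbijpresord}, and Lemma \ref{partordisomimplisframeisom}.
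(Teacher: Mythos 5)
Your proposal is correct and takes essentially the same route as the paper: the paper's own proof consists of citing Lemma \ref{eqbijpresord} for the order-preserving bijection with order-preserving inverse and then Lemma \ref{partordisomimplisframeisom} to upgrade it to a frame isomorphism. Your additional attention to identifying $\mathbbm{T}S$ with $S$ and to the frame structure on $\mbox{Arr}\langle \mathbbm{T}S,\mathbbm{T}L\rangle$ via the representation theorem is bookkeeping the paper leaves implicit, not a different argument.
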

\begin{proof}
Let $\mathbbm{T}$ be an equivalence on the category of frames. Let $L$ be a frame.  Then, by Lemma\ref{eqbijpresord}, $\mathbbm{T}$ induces an order-preserving function from $\mbox{Arr}\langle S,L\rangle $ to $\mbox{Arr}\langle\mathbbm{T}S,\mathbbm{T}L\rangle$.  By Lemma\ref{partordisomimplisframeisom} this function is an isomorphism of frames.
\end{proof}
\section{The Category of Frames is Rigid}
Let $\mathbbm{T}$ be an equivalence on the category of frames, $S$ be the Sierpinski Locale, and $L$ any frame at all.  Then, we have seen that
\begin{align*}
L\simeq& \mbox{Arr}\langle S,L\rangle\\
\simeq& \mbox{Arr}\langle S,\mathbbm{R}\mathbbm{T}(L)\rangle\\
\simeq& \mbox{Arr}\langle \mathbbm{T}S,\mathbbm{T}L\rangle\\
\simeq& \mbox{Arr}\langle S,\mathbbm{T}L\rangle\\
\simeq& TL
\end{align*}
Thus, $\mathbbm{T}$ is an essential isomorphism, and, so, by definition, the category of frames is rigid.

\section{The Category of Locales is Rigid}
 If we take the definition of the category of locales to be that it is the opposite of the category of locales, then it is not difficult to see that it is rigid.  Indeed, if any category $\mathcal{C}$ is rigid then so is its opposite category $\mathcal{C}^{op}$.  We recall the following definition
\begin{definition}
Let $\mathcal{C}$ be a category and $\mathbbm{T}:\mathcal{C}\to\mathcal{C}$ an endofunctor on $\mathcal{C}$.  Then, we define $\mathbbm{T}^{op}:\mathcal{C}^{op}\to\mathcal{C}^{op}$ as the functor which assigns to any object $A$ the object $\mathbbm{T}A$ and to any arrow $f^{op}:B\to A$ the arrow $\left(\mathbbm{T}f\right)^{op}$.
\end{definition}
The following result is hardly surprising
\begin{lemma}
Suppose $\mathbbm{T}$ is an endo-equivalence on $\mathcal{C}$.  Then so too is $\mathbbm{T}^{op}$ on $\mathcal{C}^{op}$.
\end{lemma}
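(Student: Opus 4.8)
The plan is to avoid working directly with the left-adjoint formulation in Definition~\ref{defeq} and instead invoke criterion (4) of Lemma~\ref{equivequiv}: a functor is an equivalence precisely when it is full, faithful, and essentially surjective (every object of the target is isomorphic to one in the image). Each of these three conditions passes from $\mathbbm{T}$ on $\mathcal{C}$ to $\mathbbm{T}^{op}$ on $\mathcal{C}^{op}$ essentially formally, so the whole argument is a matter of unwinding the definition of the opposite category.

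First I would confirm that $\mathbbm{T}^{op}$ really is a functor $\mathcal{C}^{op}\to\mathcal{C}^{op}$: on objects it is $\mathbbm{T}$, and on an arrow $f^{op}:B\to A$ of $\mathcal{C}^{op}$ (that is, $f:A\to B$ in $\mathcal{C}$) it returns $(\mathbbm{T}f)^{op}$; preservation of identities is immediate and preservation of composition follows because composition in $\mathcal{C}^{op}$ is composition in $\mathcal{C}$ with the order reversed, together with functoriality of $\mathbbm{T}$. Next, for objects $A,B$ of $\mathcal{C}^{op}$ one has, by definition of the opposite category, a natural identification of the hom-set of $\mathcal{C}^{op}$ from $A$ to $B$ with the hom-set $\mbox{Arr}\langle B,A\rangle$ of $\mathcal{C}$, and under this identification the map that $\mathbbm{T}^{op}$ induces on hom-sets is exactly the map that $\mathbbm{T}$ induces from $\mbox{Arr}\langle B,A\rangle$ to $\mbox{Arr}\langle \mathbbm{T}B,\mathbbm{T}A\rangle$. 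Since $\mathbbm{T}$ is full and faithful that latter map is a bijection, hence so is the former, so $\mathbbm{T}^{op}$ is full and faithful.

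For essential surjectivity, let $D$ be an object of $\mathcal{C}^{op}$, i.e.\ an object of $\mathcal{C}$. By Lemma~\ref{equivequiv}(4) applied to $\mathbbm{T}$ there is an object $A$ of $\mathcal{C}$ and an isomorphism $\phi:\mathbbm{T}A\to D$ in $\mathcal{C}$. Then $\phi^{op}:D\to\mathbbm{T}^{op}A$ is an arrow of $\mathcal{C}^{op}$ with two-sided inverse $(\phi^{-1})^{op}$, so it is an isomorphism in $\mathcal{C}^{op}$; thus $D$ is isomorphic in $\mathcal{C}^{op}$ to $\mathbbm{T}^{op}A$. Invoking Lemma~\ref{equivequiv}(4) once more, now in $\mathcal{C}^{op}$, we conclude that $\mathbbm{T}^{op}$ is an endo-equivalence of $\mathcal{C}^{op}$.

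I do not expect a genuine obstacle; the one thing that needs care is variance, since passing to the opposite reverses the direction of every arrow and hence of every natural transformation. A slightly different route would use criterion (3) of Lemma~\ref{equivequiv}: take a right adjoint $\mathbbm{R}$ of $\mathbbm{T}$ together with natural isomorphisms $\alpha:1_{\mathcal{C}}\Rightarrow\mathbbm{R}\mathbbm{T}$ and $\beta:1_{\mathcal{C}}\Rightarrow\mathbbm{T}\mathbbm{R}$, observe that componentwise dualizing $\alpha$ and $\beta$ yields natural isomorphisms $\mathbbm{R}^{op}\mathbbm{T}^{op}\Rightarrow 1_{\mathcal{C}^{op}}$ and $\mathbbm{T}^{op}\mathbbm{R}^{op}\Rightarrow 1_{\mathcal{C}^{op}}$, and take their inverses to get the pair of natural isomorphisms required by criterion (3); this route is equally short, but it is exactly here that the reversal of direction must not be fumbled.
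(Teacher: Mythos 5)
Your proof is correct, and its core is the same as the paper's: dualize arrows and transfer fullness and faithfulness of $\mathbbm{T}$ across the identification of $\mathcal{C}^{op}$-hom-sets with reversed $\mathcal{C}$-hom-sets, then invoke criterion (4) of Lemma \ref{equivequiv}. The substantive difference is that you also verify the second half of that criterion: essential surjectivity of $\mathbbm{T}^{op}$, obtained by dualizing an isomorphism $\phi:\mathbbm{T}A\to D$ in $\mathcal{C}$ to an isomorphism in $\mathcal{C}^{op}$. The paper's own proof checks only that $\mathbbm{T}^{op}$ is full and faithful and then cites \ref{equivequiv}, but none of the listed criteria is satisfied by fullness and faithfulness alone (a full and faithful endofunctor need not be an equivalence), so your extra step is not decorative --- it is exactly what is needed to make the appeal to \ref{equivequiv} legitimate; likewise your explicit check that $\mathbbm{T}^{op}$ is a functor is a small but genuine point the paper passes over. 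Your alternative route via criterion (3), dualizing the unit and counit isomorphisms and inverting them, is also sound, and your caution about the reversal of direction of natural transformations under $(-)^{op}$ is precisely the point at which such an argument could otherwise go wrong.
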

\begin{proof}
Since $\mathbbm{T}$ is an endo-equivalence, it is full and faithful by \ref{equivequiv}.  Suppose $f^{op}:B\to A$ and $g^{op}:B\to A$ are distinct arrows in $\mathcal{C}^{op}$. Then, since $\mathbbm{T}$ is faithful so too are the arrows $\mathbbm{T}f$ and $\mathbbm{T}g$.  Thus, so too are the arrows  $\left(\mathbbm{T}f\right)^{op}$ and $\left(\mathbbm{T}g\right)^{op}$. Thus $\mathbbm{T}$ is faithful.  Let $g^{op}:\mathbbm{T}^{op}B\to \mathbbm{T}^{op}A$ be an arbitrary arrow in $\mathcal{C}^{op}$.  Then, since $\mathbbm{T}$ is faithful, there is some arrow $f:A\to B$ so that $\mathbbm{T}f = g$.  Thus, $\mathbbm{T}^{op}f^{op} = g^{op}$.\\
Thus, $\mathbbm{T}^{op}$ is full and faithful.  Thus $\mathbbm{T}^{op}$ is an equivalence by \ref{equivequiv}.
\end{proof}
\begin{theorem}
Let $\mathbbm{T}$ is an endo-equivalence isomorphic to the identity functor on $\mathcal{C}$, then $\mathbbm{T}^{op}$ is an endo-equivalence isomorphic to the identity functor on $\mathcal{C}^{op}$.
\end{theorem}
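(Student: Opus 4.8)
The plan is to produce, from a natural isomorphism witnessing $\mathbbm{T}\cong 1_{\mathcal{C}}$, an explicit natural isomorphism witnessing $\mathbbm{T}^{op}\cong 1_{\mathcal{C}^{op}}$, obtained essentially by ``opping'' the components. First I would fix a natural isomorphism $\alpha:1_{\mathcal{C}}\Rightarrow\mathbbm{T}$. Since each $\alpha_A:A\to\mathbbm{T}A$ is an isomorphism in $\mathcal{C}$, it has a two-sided inverse $\alpha_A^{-1}:\mathbbm{T}A\to A$, and a one-line computation shows the family $(\alpha_A^{-1})_A$ is itself a natural transformation $\mathbbm{T}\Rightarrow 1_{\mathcal{C}}$: composing the naturality square $\mathbbm{T}f\circ\alpha_A=\alpha_B\circ f$ (for $f:A\to B$) on the right with $\alpha_A^{-1}$ and on the left with $\alpha_B^{-1}$ yields $\alpha_B^{-1}\circ\mathbbm{T}f=f\circ\alpha_A^{-1}$, which is precisely the naturality of $\alpha^{-1}$ at $f$.

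Next I would define $\beta:1_{\mathcal{C}^{op}}\Rightarrow\mathbbm{T}^{op}$ by the assignment $\beta_A:=(\alpha_A^{-1})^{op}$ on each object $A$. Recalling that the objects of $\mathcal{C}^{op}$ are those of $\mathcal{C}$ and that $\mathbbm{T}^{op}A=\mathbbm{T}A$, this $\beta_A$ is indeed an arrow $A\to\mathbbm{T}^{op}A$ in $\mathcal{C}^{op}$, and it is an isomorphism there since $\alpha_A^{-1}$ is an isomorphism in $\mathcal{C}$ (its inverse in $\mathcal{C}^{op}$ is $(\alpha_A)^{op}$). For the naturality of $\beta$, let $f^{op}:B\to A$ be an arbitrary arrow of $\mathcal{C}^{op}$, i.e. $f:A\to B$ in $\mathcal{C}$. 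The commuting square demanded by the definition of a natural transformation is, using the defining formula $\mathbbm{T}^{op}f^{op}=(\mathbbm{T}f)^{op}$, the $\mathcal{C}^{op}$-equation $(\mathbbm{T}f)^{op}\circ\beta_B=\beta_A\circ f^{op}$; applying $(-)^{op}$ (which reverses composition order) turns this into the $\mathcal{C}$-equation $\alpha_B^{-1}\circ\mathbbm{T}f=f\circ\alpha_A^{-1}$, which holds by the first paragraph. Hence $\beta$ is a natural isomorphism, so $\mathbbm{T}^{op}$ is isomorphic to $1_{\mathcal{C}^{op}}$; combined with the preceding lemma that $\mathbbm{T}^{op}$ is an endo-equivalence on $\mathcal{C}^{op}$, this is the claimed statement.

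I do not expect a genuine obstacle: the whole argument reduces to bookkeeping the direction of arrows under $(-)^{op}$ together with the standard fact that the inverse of a natural isomorphism is again natural. The only point needing care is matching the naturality square for $\beta$ in $\mathcal{C}^{op}$ with the correct square in $\mathcal{C}$, namely the naturality of $\alpha^{-1}$ and not of $\alpha$ itself; this is exactly why I pass to $\alpha^{-1}$ at the outset rather than attempting to use $\alpha$ directly, after which the verification is purely formal.
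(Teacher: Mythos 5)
Your proposal is correct and takes essentially the same route as the paper: dualize the naturality square of the isomorphism $1_{\mathcal{C}}\Rightarrow\mathbbm{T}$ and use that isomorphisms remain isomorphisms in $\mathcal{C}^{op}$. The only cosmetic difference is that you first pass to $\alpha^{-1}$ so your natural isomorphism points from $1_{\mathcal{C}^{op}}$ to $\mathbbm{T}^{op}$, whereas the paper ops the components $\alpha_{A}$ directly and obtains the (equally sufficient) isomorphism in the other direction.
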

\begin{proof}
Suppose $\mathbbm{T}$ is an endo-equivalence isomorphic to the identity functor on $\mathcal{C}$.  Let $f:A\to B$ be an arbitrary arrow in $\mathcal{C}$. Then, we have the commutative diagram\\
\centerline{
\xymatrix{
A\ar[r]^{\alpha_{A}}\ar[d]_{f} & \mathbbm{T}A\ar[d]^{\mathbbm{T}f}\\
B\ar[r]_{\alpha_{B}} & \mathbbm{T}B
}
}
where $\alpha_{*}$ are isomorphisms.  Thus, by definition, we have the commutative diagram\\
\centerline{
\xymatrix{
A & \mathbbm{T}A\ar[l]^{\alpha_{A}^{op}}\\
B\ar[u]^{f^{op}} & \mathbbm{T}B\ar[l]^{\alpha_{B}^{op}}\ar[u]_{\mathbbm{T}f^{op}}
}
}
in $\mathcal{C}^{op}$.  Since the dual notion of isomorphism is isomorphism, the $\alpha_{*}^{op}$ are also isomorphisms.  Thus, $\mathbbm{T}^{op}$ is naturally isomorphic to the identity on $\mathcal{C}^{op}$ 
\end{proof}
\begin{theorem}
The category of locales, defined to be the opposite category of the category of frames is rigid.
\end{theorem}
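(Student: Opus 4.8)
The plan is to obtain this as an immediate consequence of two facts already in hand: that the category of frames is rigid, and that rigidity passes to opposite categories. Since, by definition, the category of locales is the opposite of the category of frames, it suffices to show that if a category $\mathcal{C}$ is rigid then so is $\mathcal{C}^{op}$, and then to apply this with $\mathcal{C}$ the category of frames.

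First I would observe that the operation $\mathbbm{T}\mapsto\mathbbm{T}^{op}$ sets up a bijection between endofunctors of $\mathcal{C}$ and endofunctors of $\mathcal{C}^{op}$: given an endofunctor $\mathbbm{S}$ of $\mathcal{C}^{op}$, define $\mathbbm{T}$ on $\mathcal{C}$ by $\mathbbm{T}A = \mathbbm{S}A$ on objects and, for an arrow $f:A\to B$ in $\mathcal{C}$, by taking $\mathbbm{T}f:\mathbbm{T}A\to\mathbbm{T}B$ to be the unique arrow of $\mathcal{C}$ with $(\mathbbm{T}f)^{op} = \mathbbm{S}(f^{op})$. One checks directly that $\mathbbm{T}$ is a functor and that $(\mathbbm{T})^{op} = \mathbbm{S}$, using $(\mathcal{C}^{op})^{op} = \mathcal{C}$; it is also immediate that $(1_{\mathcal{C}})^{op} = 1_{\mathcal{C}^{op}}$.

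Now suppose $\mathbbm{S}$ is an endo-equivalence of $\mathcal{C}^{op}$ and write $\mathbbm{S} = \mathbbm{T}^{op}$ for the endofunctor $\mathbbm{T}$ of $\mathcal{C}$ produced above. Running the argument of the lemma stating that $\mathbbm{T}^{op}$ is an endo-equivalence whenever $\mathbbm{T}$ is --- now with the roles of $\mathcal{C}$ and $\mathcal{C}^{op}$ interchanged --- shows that $\mathbbm{T}$ is an endo-equivalence of $\mathcal{C}$. Taking $\mathcal{C}$ to be the category of frames, the theorem that the category of frames is rigid gives a natural isomorphism $\mathbbm{T}\cong 1_{\mathcal{C}}$. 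By the theorem that $\mathbbm{T}\cong 1_{\mathcal{C}}$ implies $\mathbbm{T}^{op}\cong 1_{\mathcal{C}^{op}}$, we conclude that $\mathbbm{S}\cong 1_{\mathcal{C}^{op}}$. Since $\mathbbm{S}$ was an arbitrary endo-equivalence, $\mathcal{C}^{op}$ --- the category of locales --- is rigid.

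The only delicate point, and it is a mild one, is the bookkeeping with $(-)^{op}$: one must verify that every endo-equivalence of the opposite category really does arise as $\mathbbm{T}^{op}$ for some $\mathbbm{T}$ on the original category, that $(-)^{op}$ carries equivalences to equivalences in both directions, and that it fixes the identity functor and respects natural isomorphism. All of these are routine, and the lemmas of the preceding section already carry out essentially all of the work; the present theorem is then just the composite of ``the category of frames is rigid'' with ``rigidity is a self-dual property''.
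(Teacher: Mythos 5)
Your proposal is correct and follows exactly the route the paper intends: the paper leaves this final theorem without an explicit proof, and it is meant as the immediate assembly of the rigidity of frames with the preceding lemma (that $\mathbbm{T}^{op}$ is an endo-equivalence when $\mathbbm{T}$ is) and the theorem (that $\mathbbm{T}\cong 1_{\mathcal{C}}$ implies $\mathbbm{T}^{op}\cong 1_{\mathcal{C}^{op}}$). Your added bookkeeping --- checking that every endo-equivalence of $\mathcal{C}^{op}$ really arises as $\mathbbm{T}^{op}$ for an endo-equivalence $\mathbbm{T}$ of $\mathcal{C}$, via $(\mathcal{C}^{op})^{op}=\mathcal{C}$ --- is precisely the step the paper glosses over, so your write-up is a faithful (and slightly more complete) version of the same argument.
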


\bibliographystyle{plain}
\bibliography{tau}

\end{document}